\newcommand{\R}{\mathbb{R}}
\newcommand{\inr}[1]{\left\langle #1 \right\rangle}
\newcommand{\E}{\mathbb{E}}
\newtheorem{lemma}{Lemma}
\newtheorem{theorem}{Theorem}
\newtheorem{corollary}{Corollary}
\newtheorem{proposition}{Proposition}
\newtheorem*{proposition*}{Proposition}
\newtheorem{example}{Example}
\newtheorem{definition}{Definition}
\newtheorem{remark}{Remark}
\numberwithin{equation}{section}
\def\IND{\mathbbm{1}}
\newcommand{\ol}{\overline}
\newcommand{\wt}{\widetilde}
\newcommand{\wh}{\widehat}
\newcommand{\EXP}{\mathbb{E}}
\newcommand{\PROB}{\mathbb{P}}
\newcommand{\var}{\mathrm{Var}}
\newcommand{\F}{{\mathcal F}}
\newcommand{\argmax}{\mathop{\mathrm{argmax}}}
\newtheorem*{assumption*}{\assumptionnumber}
\providecommand{\assumptionnumber}{}
\newenvironment{assumption}[1]
 {%
  \renewcommand{\assumptionnumber}{Assumption $#1$}%
  \begin{assumption*}%
  \protected@edef\@currentlabel{$#1$}%
 }
 {%
  \end{assumption*}
 }
\begin{document}

\title{On Mean Estimation for Heteroscedastic Random Variables
\thanks{
G\'abor Lugosi was supported by
the Spanish Ministry of Economy and Competitiveness,
Grant MTM2015-67304-P and FEDER, EU;
and Google Focused Award ``Algorithms and Learning for AI''. Luc
Devroye was supported by NSERC Discovery Grants and by an FRQNT Team Research Grant.
}}
\author{
Luc Devroye 
\thanks{School of Computer Science, McGill University, Montreal, Canada. Email: lucdevroye@gmail.com}
\and 
Silvio Lattanzi
\thanks{Google Research, Z\"{u}rich, Email: \{silviol, zhivotovskiy\}@google.com}
\and G\'abor Lugosi
\thanks{Department of Economics and Business, Pompeu
  Fabra University, Barcelona, Spain. Email: gabor.lugosi@upf.edu}
\thanks{ICREA, Pg. Lluís Companys 23, 08010 Barcelona, Spain}
\thanks{Barcelona Graduate School of Economics}
\and
Nikita Zhivotovskiy \footnotemark[3]
}

\maketitle
\abstract{
We study the problem of estimating the common mean $\mu$ of $n$
independent symmetric random variables with different and unknown standard deviations $\sigma_1 \le \sigma_2 \le \cdots \le\sigma_n$.
We show that, under some mild regularity assumptions on the distribution, 
there is a fully adaptive estimator $\wh{\mu}$ such that it is invariant to permutations of the elements of the sample and satisfies that, up to logarithmic factors, with high probability,
\[
|\wh{\mu} - \mu| \lesssim \min\left\{\sigma_{m^*}, \frac{\sqrt{n}}{\sum_{i = \sqrt{n}}^n \sigma_i^{-1}} \right\}~,
\]
where the index $m^* \lesssim \sqrt{n}$ satisfies $m^* \approx
\sqrt{\sigma_{m^*}\sum_{i = m^*}^n\sigma_i^{-1}}$. 
}

\section{Introduction}

In this note we study the problem of estimating the common mean $\mu
\in \R$ of $n$ independent real random variables $X_1,\ldots,X_n$. These random variables do not need
to be identically distributed. Moreover, the variances of the $X_i$
may greatly vary and therefore the information each observation
carries about the mean may be different. For the sake of this
introductory discussion, assume that the $X_i$ all have  normal
distribution so that $X_i \sim \mathcal{N}(\mu, \sigma_i^{2})$ for
some $0< \sigma_1\le \cdots \le \sigma_n$.

If the values of the standard deviations $\sigma_i$ were known, then
one could choose the maximum likelihood estimator 
\[
    \wh{\mu}= \frac{\sum_{i=1}^n \frac{X_i}{\sigma_i^2}}{\sum_{i=1}^n \frac{1}{\sigma_i^2}}~,
\] 
leading to an expected error $\EXP |\wh{\mu} - \mu| \le
\left(\sum_{i=1}^n \sigma_i^{-2}\right)^{-1/2}$.
The general study of such estimators goes back to \citep[Chapter 3,
Section 4]{ibragimov2013statistical} where the estimation of a single parameter based on 
independent but non-identically distributed observations is studied.
However, this idealistic estimator assumes that the standard deviation
of each sample point is known to the statistician. 

In this note we consider the situation where nothing is known about the
values of the $\sigma_i$ (or their assignments to the data points). In particular, we focus on the estimators invariant to permutations of the elements of the sample.
Naturally, one may always compute the sample mean $(1/n)\sum_{i = 1}^n
X_i$. However, the sample mean has an error of the order of $(1/n)
\left(\sum_{i=1}^n \sigma_i^2\right)^{1/2}$ whose performance 
deteriorates even if a single data point has a large variance.

For symmetric distributions like the normal distribution, another -- and
more robust -- natural
estimator of the mean is the sample median. 
One of the contributions of this note is to provide new
non-asymptotic performance guarantees for the sample median.
In particular, we show that under some mild assumptions
the error of the sample median is bounded, with high probability, by 
\begin{equation}
\label{eq:harmonicmean}
     \frac{c \sqrt{n\log n }}{\sum_{i = \sqrt{c n\log n}}^n \sigma_i^{-1}} 
\end{equation}
for some constant $c$ (see Proposition \ref{prop:medianintterval} for
the rigorous statement). 

As simple as the sample median is, it has the disadvantage that it does not take advantage
of the presence of data points with very small variance. Indeed, the
performance of the sample median is essentially insensitive to the approximately
$\sqrt{n}$ smallest variances. At the same time, the presence of
data with very small variances makes the problem much easier. 
A simple way to exploit such situations is in using the so-called 
\emph{modal interval estimator} introduced by
\cite{chernoff1964estimation} for estimating the mode of a density
function. The modal interval estimator looks for the most populated
interval of a certain length $s>0$ and outputs its mid-point. The main
challenge in applying this method in our setting is that without any knowledge
of the variances $\sigma_1,\sigma_2,\dots$ it is a hard to establish a
good value of $s$ a priori. In Proposition 
\ref{thm:intervalest} below we establish a simple sufficient condition
for the length $s$ that guarantees that the modal interval contains the mean $\mu$. Roughly speaking, 
this condition guarantees that random fluctuations of the data far
from the mean cannot produce an interval of length $s$ that has more
points than the expected number of points in the interval of same
length centered at the mean $\mu$. We call such ``good'' values of $s$
\emph{admissible}. Admissibility of an interval length depends, in a
complex way, on the entire sequence $\sigma_1,\ldots,\sigma_n$.
Ideally, one would like to use the modal interval estimator with the
smallest possible admissible interval length. The main contribution
of this note is a fully adaptive estimator that essentially achieves
this goal. More precisely, without any previous knowledge of the $\sigma_i$, 
 we show that one can construct a completely data-driven estimator that has a
performance at least as good (up to constant factors in the error) 
as the best of the sample median and the
modal interval estimator with the smallest admissible interval
length. 

In the remainder of this introduction we discuss previous related work.
In Section \ref{sec:median} the analysis of the sample median is
presented. We also show that an appropriately chosen \emph{median
  interval} is a valid empirical confidence interval. This is
important in the construction of the adaptive estimator.
The modal interval estimator is analyzed in Section \ref{sec:modalinterval}.
The fully adaptive estimator is described in Section
\ref{sec:adaptiveestimator} and its performance guarantees are
established in Theorem \ref{thm:adaptiveestimator}.
In Section \ref{sec:comparisons} we take a closer look at  some
concrete examples and compare our performance bounds with those 
of previous work. 

\subsubsection*{Related work}

For some classical references on the maximum likelihood
estimator in our setup we refer to the work of
\citet{ibragimov1976local} and
\citet{beran1982robust}. The sample median has been analyzed in the literature in our setup. For example, 
\citet{mizera1998necessary} provide necessary and sufficient
conditions for the consistency of the sample median for triangular
arrays of independent, not identically distributed random variables
(in a more general setting than ours).
The role of the sum of the reciprocals of the standard deviations as in \eqref{eq:harmonicmean}
appears in early work. In particular, the result of \citet[Theorem 2]{nevzorov1984rate} can be used to provide
rates of convergence of the sample median to the normal law for 
non-identically distributed Gaussian data  that involves 
$\sum_{i=1}^n \sigma_i^{-1}$. 
The work of \citet*[Theorem 7]{gordon2006minimum} uses this quantity in
the context  of the moments
of order statistics for non-identically distributed random
variables. Moreover, the work of \citet[Corollary 6]{xia2019non} makes direct
connections between the sum of reciprocals and the performance of the
sample median, see Section \ref{sec:comparisons} for a detailed comparison.
The same quantity appears in the analysis of the iterative trimming
algorithm of \citet[Remark following Theorem 1]{yuan2020learning}.
Importantly, in the context of the mean estimation problem, some of the
above-mentioned results provide performance guarantees when the value
of $\sum_{i=1}^n \sigma_i^{-1}$ is large, whereas our bounds provide sharp
guarantees for the entire range of values of the sum of reciprocals of
the standard deviations.

The most related papers are \citep*{chierichetti2014learning}, 
\citep*{pensia2019estimating}, and 
\citep{yuan2020learning}. For example, \cite{chierichetti2014learning} construct
an estimator whose error is bounded, with high
probability, by
$\wt{O}\left(\sqrt{n}\sigma_{\log n}\right)$, where $\wt{O}(\cdot )$ suppresses multiplicative poly-logarithmic factors.
The \emph{hybrid} estimator of \citet[Proposition 5]{pensia2019estimating}
uses a combination of the \emph{shortest gap} with the median estimators, quite similar to
our estimator. 
However, in contrast to these previous results, the estimator proposed
here is fully adaptive and thus requires no tuning parameters.  
Our estimator also compares favourably with the iterative trimming
algorithm of \citet{yuan2020learning}, which does not cover the entire
range of the values of $\sigma_1, \ldots, \sigma_n$ and depends on
some tuning parameters and the initialization.  Section
\ref{sec:comparisons} includes extensive comparisons with these
papers. 
In particular, we show that, up to logarithmic factors,
our bounds are never worse than the previous (non-adaptive) bounds.

\subsubsection*{Notation}
In what follows, we denote $a \wedge b = \min\{a, b\}$ and $a \lor b = \max\{a, b\}$. Given $X_{1}, \ldots, X_{n}$ let $X_{(1)}, \ldots, X_{(n)}$ denote the non-decreasing rearrangement of its elements. The value $X_{(i)}$ is usually referred to as the $i$-\emph{th order statistic}. In what follows, $a \lesssim b$ and $b \gtrsim a$ denote the existence of a numerical constant $c$ such that $a \leq cb$.  The numerical constants are denoted by $c, c_{1}, c_{2}, \ldots > 0$. Their values may change from line to line. We also use the standard $O(\cdot )$ notation as well as its version $\wt{O}(\cdot )$ that suppresses multiplicative poly-logarithmic factors. Finally, let $[n]$ denote the set $\{1, \ldots, n\}$.

\section{Analysis of the $\alpha$-median interval}
\label{sec:median}

When the distribution of each random variable $X_i$ is symmetric about
the mean $\mu$,
the empirical median is a natural estimator of the
mean. In this section we present an analysis of the empirical median.
We assume the following regularity conditions.

\begin{assumption}{\mathbf{A}}
\label{AssumptionA}
Let $X_1, \ldots, X_n$ be independent random variables 
and let $0 < \sigma_1 \le \cdots \le \sigma_n$. We assume that
\begin{itemize}
\item[(i)] $\EXP X_i = \mu$ for all $i\in [n]$~;
\item[(ii)] Symmetry: for each $i\in [n]$, $X_i - \mu$ and $\mu - X_i$ have the same distribution~;
\item[(iii)] Tail assumption: for some constant $\beta > 0$,  we have that for any $t > 0$,
\begin{equation}
\label{eq:tailassumpt}
\PROB\left\{|(X_i - \mu)/\sigma_i| \ge t\right\} \le \exp(-\beta t)~.
\end{equation}
\end{itemize}
\end{assumption}

A canonical example satisfying Assumption \ref{AssumptionA} is when
 $X_i \sim \mathcal{N}(\mu, \sigma_i^2)$. In this case one  may choose
 $\beta = \sqrt{\frac{2}{\pi}}$.
Note that we do not need to assume that the $(X_i-\mu)/\sigma_i$ are
identically distributed. It suffices that they are independent,
symmetric, and satisfy the tail assumption \eqref{eq:tailassumpt}. Note also that condition
(iii) implies that $\PROB\{ |(X_i - \mu)/\sigma_i| < t \}$ is lower
bounded by $\beta t/2$ for $t\le 2/\beta$. 
In particular, if $X_i$ has an absolute continuous distribution, this assumption
implies that the density of $(X_i - \mu)/\sigma_i$ is bounded away
from zero near zero.

For reasons that will become apparent later, we consider not only the
empirical median as a point estimator but also the so-called \emph{median
  interval}, defined as the interval whose endpoints are $X_{(n/2-k)}$
and $X_{(n/2+k)}$ for an appropriately chosen value of $k$.
This will allow us to obtain an empirical confidence interval 
that is essential for our adaptive procedure.

To define the median interval, assume, for simplicity, that $n$ is
even and recall that $X_{(1)} \le X_{(2)} \le \cdots \le X_{(n)}$ denote the
order statistics of
$X_1,\ldots,X_n$. In order to avoid complications arising from ties,
we assume that the $X_i$ have a nonatomic distribution. 

We fix $\alpha \in (0,\sqrt{n}/2)$ such that $\alpha \sqrt{n}$ is
an integer. 
Consider the random interval 
\begin{equation}
\label{eq:medianinterval}
I_{\alpha} = [X_{(n/2 - \alpha \sqrt{n})}, X_{(n/2 + \alpha \sqrt{n})}]~.
\end{equation}
We refer to $I_{\alpha}$ as the $\alpha$-\emph{median interval}. 
Our first result
provides two key properties of the median interval: if $\alpha$ is
proportional to $\sqrt{\log(1/\delta)}$, the interval $I_{\alpha}$
contains the mean $\mu$ with probability at least $1-\delta$.
Moreover, we provide an upper bound for the length of $I_{\alpha}$
in terms of the sum of the reciprocals of the standard deviations.

\begin{proposition}
\label{prop:medianintterval}
Let Assumption \ref{AssumptionA} be satisfied. Fix $\delta \in (0, 1)$
such that $128\log \frac {6}{\delta} \le n$ and set $\alpha =
\sqrt{2\log \frac {6}{\delta}}$. The median interval $I_{\alpha}$ satisfies, with probability at least $1 - \delta$, that $\mu \in I_{\alpha}$ and
\[
|I_{\alpha}| \le 8e\sqrt{2}\left(\log\frac{3}{\delta} \lor \log\left(n+ 1\right)\right)\beta^{-1}\max\limits_{1 \le j \le 8\alpha\sqrt{n}}\frac{8\alpha\sqrt{n} + 1 - j}{\sum_{i = j}^n\sigma_i^{-1}}~.
\]
\end{proposition}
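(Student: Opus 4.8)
The plan is to establish the two assertions separately, spending $\delta/3$ of the failure probability on ``$\mu\in I_\alpha$'' and $\delta/3$ on each of the two endpoints of $I_\alpha$. For the containment, symmetry and nonatomicity give $\PROB\{X_i\le\mu\}=\tfrac12$, so $B:=\#\{i:X_i>\mu\}\sim\mathrm{Binomial}(n,\tfrac12)$, and $\{\mu\notin I_\alpha\}\subseteq\{B\ge n/2+\alpha\sqrt n\}\cup\{B\le n/2-\alpha\sqrt n\}$. Hoeffding's inequality bounds each of these by $\exp(-2\alpha^2)=(\delta/6)^4$, and $2(\delta/6)^4\le\delta/3$; this is where the choice $\alpha=\sqrt{2\log(6/\delta)}$ is used, with slack that will help below.

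For the length, I would put $m=8\alpha\sqrt n$ and $\rho=4e\sqrt2\,(\log\tfrac3\delta\lor\log(n+1))\,\beta^{-1}\max_{1\le j\le m}\frac{m+1-j}{\sum_{i=j}^n\sigma_i^{-1}}$, so that the goal becomes $|I_\alpha|\le 2\rho$. Since $|I_\alpha|=X_{(n/2+\alpha\sqrt n)}-X_{(n/2-\alpha\sqrt n)}$ and the $X_i$ are symmetric about $\mu$, it suffices to show $X_{(n/2+\alpha\sqrt n)}\le\mu+\rho$ with probability at least $1-\delta/3$, i.e.\ that $M:=\#\{i:X_i>\mu+\rho\}\le n/2-\alpha\sqrt n$. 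The crux is a lower bound on $\frac n2-\EXP M=\sum_{i=1}^n\PROB\{\mu<X_i\le\mu+\rho\}$ (using $\PROB\{X_i\le\mu\}=\tfrac12$), obtained by splitting the indices at the scale where \eqref{eq:tailassumpt} turns over. With $j_\rho:=\min\{i:\sigma_i\ge\beta\rho\}$ (and $j_\rho=n+1$ if no such index exists): for $i\ge j_\rho$ one has $\rho/\sigma_i\le1/\beta$, so the anti-concentration consequence of Assumption \ref{AssumptionA}(iii) gives $\PROB\{\mu<X_i\le\mu+\rho\}\ge\frac{\beta}{4}\cdot\frac{\rho}{\sigma_i}$, whereas for $i<j_\rho$ the tail bound gives $\PROB\{X_i>\mu+\rho\}\le\frac12 e^{-\beta\rho/\sigma_i}<\frac1{2e}$ and hence $\PROB\{\mu<X_i\le\mu+\rho\}>\frac14$. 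Summing, $\frac n2-\EXP M\ge\frac{\beta\rho}{4}\sum_{i=j_\rho}^n\sigma_i^{-1}+\frac{j_\rho-1}{4}$. If $j_\rho-1\ge m$ this already exceeds $2\alpha\sqrt n$; otherwise $j_\rho\in[1,m]$ lies in the range of the maximum defining $\rho$, so $\beta\rho\sum_{i=j_\rho}^n\sigma_i^{-1}\ge4e\sqrt2\,(\log\tfrac3\delta\lor\log(n+1))\,(m+1-j_\rho)\ge m+1-j_\rho$, whence $\frac{\beta\rho}{4}\sum_{i=j_\rho}^n\sigma_i^{-1}\ge\frac{m+1-j_\rho}{4}=2\alpha\sqrt n-\frac{j_\rho-1}{4}$ and the remaining term closes the gap; in all cases $\frac n2-\EXP M\ge2\alpha\sqrt n$. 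Since $\var(M)\le\EXP M\le n/2$ and $\alpha\sqrt n\le n/8$ by the hypothesis $128\log\tfrac6\delta\le n$, Bernstein's inequality gives $\PROB\{M\ge\EXP M+\alpha\sqrt n\}\le\exp(-c\alpha^2)\le\delta/3$, so $M\le\EXP M+\alpha\sqrt n\le n/2-\alpha\sqrt n$ with the required probability; the other endpoint is symmetric, and a union bound over the three events yields $\mu\in I_\alpha$ together with $|I_\alpha|\le2\rho$, which is exactly the claimed estimate.

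I expect the main obstacle to be the two-regime lower bound on $n/2-\EXP M$ and its matching with $\max_{1\le j\le m}\frac{m+1-j}{\sum_{i\ge j}\sigma_i^{-1}}$: the split must occur at $\sigma_i\approx\beta\rho$, anti-concentration may be invoked only where \eqref{eq:tailassumpt} licenses it (i.e.\ $\rho\lesssim\sigma_i/\beta$), and one must accommodate that the active split index $j_\rho$ is itself pinned down by $\rho$ through the maximum. The clean argument above only needs the logarithmic prefactor to exceed an absolute constant, so the stated $\log\tfrac3\delta\lor\log(n+1)$ leaves room for a coarser concentration or union-bound step, e.g.\ a union over the candidate split indices $j\in[1,m]$.
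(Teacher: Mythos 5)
Your proof is correct, but it takes a genuinely different route from the paper's for the main part of the statement (the length bound); only the containment step, via the binomial count of points above $\mu$ and Hoeffding, matches the paper's Rademacher argument. The paper bounds $|I_\alpha|$ geometrically: on a high-probability event, at least $2\alpha\sqrt n+1$ of the $k=8\alpha\sqrt n$ sample points closest to $\mu$ in absolute value fall on each side of $\mu$, which forces $|I_\alpha|\le 2|X|_{(k)}$; the order statistic $|X|_{(k)}$ is then controlled by the rearrangement moment inequality (Lemma \ref{lemmamedfirst}) combined with Markov's inequality for an optimized moment $p$ --- that last step is exactly where the factor $\log\frac3\delta\lor\log(n+1)$ enters. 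You instead fix the target radius $\rho$ and bound the counting variable $M=\#\{i:X_i>\mu+\rho\}$ directly, deriving $\frac n2-\EXP M=\sum_i\PROB\{\mu<X_i\le\mu+\rho\}\ge 2\alpha\sqrt n$ from the two-regime split at $j_\rho$ (anti-concentration from \eqref{eq:tailassumpt} where $\sigma_i\ge\beta\rho$, the tail bound itself where $\sigma_i<\beta\rho$) and matching the split index against the maximum over $j\in[1,8\alpha\sqrt n]$; binomial concentration then finishes. This is more elementary and self-contained, requiring no order-statistics lemma, and --- as you observe --- it only uses that the logarithmic prefactor exceeds an absolute constant, so it actually establishes the bound with $\log\frac3\delta\lor\log(n+1)$ replaced by a numerical constant, a strictly stronger conclusion than the stated one. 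Two small points to tighten: the constant in your final concentration step must satisfy $c\ge 1/2$ for $\exp(-c\alpha^2)\le\delta/3$ to hold for all $\delta$ (plain Hoeffding gives $c=2$, and your Bernstein computation under $\alpha\sqrt n\le n/8$ gives $c=12/13$, so either works), and the lower endpoint needs the strict inequality $\#\{i:X_i<\mu-\rho\}<n/2-\alpha\sqrt n$ rather than $\le$, which the $\alpha\sqrt n$ of slack in $\EXP M\le n/2-2\alpha\sqrt n$ absorbs trivially.
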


Note that by ignoring constant factors, Proposition \ref{prop:medianintterval} implies
\begin{equation}
\label{eq:simplemedianintreval}
|I_{\alpha}| \lesssim \beta^{-1}\log\left(\frac{n}{\delta}\right)\frac{\alpha\sqrt{n}}{\sum_{i = 8\alpha\sqrt{n}}^n\sigma_i^{-1}}~.
\end{equation}

The key to the proof of Proposition \ref{prop:medianintterval} is following rearrangement inequality due to \citet[Theorem 7]{gordon2006minimum}. Let $|X|_{(1)}, \ldots, |X|_{(n)}$ denote the non-decreasing rearrangement of $|X_{1}|, \ldots, |X_{n}|$.
\begin{lemma}
\label{lemmamedfirst}
Let $X_1, \ldots, X_n$ be independent random variables such
that for $0< \sigma_1 \le \cdots \le \sigma_n$ and $\beta > 0$, for
all $t > 0$, 
$\PROB\left(|X_i/\sigma_i| \ge t\right) \le \exp(-\beta t)$. Then for all $p \ge 1$ and $1\le k \le n$,
\[
\left(\E(|X|_{(k)})^p\right)^{\frac{1}{p}} 
\le 4\sqrt{2}\max\{p, \log(k + 1)\}\beta^{-1}\max\limits_{1 \le j \le k}\frac{k + 1 - j}{\sum_{i = j}^n\sigma_i^{-1}}~.
\]
\end{lemma}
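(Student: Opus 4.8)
The plan is to pass to the tail of $|X|_{(k)}$ and integrate. First, the layer-cake formula gives
\[
\E(|X|_{(k)})^p=\int_0^\infty p\,t^{p-1}\,\PROB(|X|_{(k)}>t)\,dt .
\]
The event $\{|X|_{(k)}>t\}$ is exactly the event that at most $k-1$ of the values $|X_1|,\dots,|X_n|$ lie in $[0,t]$, i.e. $\sum_{i=1}^n\IND\{|X_i|\le t\}\le k-1$. Since the probability that a sum of independent Bernoulli variables is at most $k-1$ is nonincreasing in each success probability (raising a success probability stochastically inflates the sum), and since $\PROB(|X_i|\le t)\ge 1-e^{-\beta t/\sigma_i}$ by the tail hypothesis, we may replace the (uncontrolled from above) distributions by explicit Bernoullis:
\[
\PROB(|X|_{(k)}>t)\ \le\ \PROB\!\Big(\textstyle\sum_{i=1}^n \bar Z_i\le k-1\Big),\qquad \bar Z_i\sim \mathrm{Ber}\!\big(1-e^{-\beta t/\sigma_i}\big)\ \text{independent}.
\]
This turns the lemma into a purely deterministic estimate once the idealized probability is bounded.

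Second, I would evaluate this idealized probability exactly and then bound it in a way that keeps the useful product. With $\bar p_i=e^{-\beta t/\sigma_i}$ (so $\bar p_1\ge\cdots\ge\bar p_n$), summing over which $r$ coordinates equal $1$,
\[
\PROB\!\Big(\textstyle\sum_i\bar Z_i\le k-1\Big)=\sum_{r=0}^{k-1}\sum_{|S|=r}\prod_{i\in S}(1-\bar p_i)\prod_{i\notin S}\bar p_i
\ \le\ \sum_{r=0}^{k-1}\binom{n}{r}\prod_{i=1}^r(1-e^{-\beta t/\sigma_i})\;e^{-\beta t\sum_{i=r+1}^n\sigma_i^{-1}},
\]
where the inequality uses $\prod_{i\in S}(1-\bar p_i)\prod_{i\notin S}\bar p_i=\prod_i\bar p_i\cdot\prod_{i\in S}\tfrac{1-\bar p_i}{\bar p_i}$ together with the fact that $\tfrac{1-\bar p_i}{\bar p_i}=e^{\beta t/\sigma_i}-1$ is nonincreasing in $i$, so the largest term is the one with $S=\{1,\dots,r\}$ (in the i.i.d.\ case this recovers the tail exactly). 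The point is that the factor $e^{-\beta t\sum_{i=r+1}^n\sigma_i^{-1}}$ survives; it is what must absorb the binomial coefficient.

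Third, I would feed this into the layer-cake integral. Using $1-e^{-x}\le\min\{x,1\}$ and integrating term by term,
\[
\E(|X|_{(k)})^p\ \le\ \frac{p}{\beta^{p}}\sum_{r=0}^{k-1}\binom{n}{r}\Big(\textstyle\prod_{i=1}^r\sigma_i^{-1}\Big)\frac{\Gamma(p+r)}{\big(\sum_{i=r+1}^n\sigma_i^{-1}\big)^{p+r}}
\]
(with the product capped at $1$ when some $\sigma_i\ll\beta t$), and the task becomes to bound the right-hand side by $\big(4\sqrt2\,\max\{p,\log(k+1)\}\,\beta^{-1}\max_{1\le j\le k}\tfrac{k+1-j}{\sum_{i=j}^n\sigma_i^{-1}}\big)^p$. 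For each $r$ one compares $\binom{n}{r}\prod_{i=1}^r\sigma_i^{-1}$ against $\big(\sum_{i=r+1}^n\sigma_i^{-1}\big)^{r}$ — by AM--GM and monotonicity of the reciprocals this ratio is essentially $1/r!$, which cancels the binomial — then uses $\Gamma(p+r)^{1/p}\lesssim p+r$ and sums a geometric-type series in $r$; the dominant term corresponds to some index $j=r+1\le k$, producing exactly the stated maximum. That no scale other than $A:=\max_{1\le j\le k}\tfrac{k+1-j}{\sum_{i=j}^n\sigma_i^{-1}}$ can appear is guaranteed by the elementary fact that $\psi(s):=\sum_{i=1}^n\min\{s/\sigma_i,1\}\ge k$ whenever $s\ge A$: writing $j_0=\#\{i:\sigma_i\le s\}$, if $j_0\ge k$ then $\psi(s)\ge j_0\ge k$, and otherwise $j_0+1\le k$ and, applying the definition of $A$ with $j=j_0+1$, $\psi(s)=j_0+s\sum_{i=j_0+1}^n\sigma_i^{-1}\ge j_0+s(k-j_0)/A\ge k$.

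I expect the main obstacle to be precisely this last, purely arithmetic, reconciliation of the binomial coefficients with the tail sums $\sum_{i=r+1}^n\sigma_i^{-1}$: one must handle the cap on $\prod_{i=1}^r(1-e^{-\beta t/\sigma_i})$ carefully (it is lossy exactly in the regime $\sigma_1\ll\beta t$, which is also the regime where the low-variance coordinates $X_1,\dots,X_k$ behave like the maximum of $k$ exponentials and genuinely contribute a logarithmic term), and one must organize the sum over $r$ so that the overhead is $\log(k+1)$ rather than the $\log n$ that a naive union bound would produce. Everything else — the layer-cake identity, the monotone coupling to Bernoullis, the term-by-term integration, and tracking the constant $4\sqrt2$ — is routine.
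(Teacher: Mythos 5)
First, a point of reference: the paper does not actually prove Lemma~\ref{lemmamedfirst}; it is imported verbatim from \citet[Theorem 7]{gordon2006minimum}, so there is no internal proof to compare against. Judged on its own terms, your argument has a genuine gap at the second step. The layer-cake formula, the identification of $\{|X|_{(k)}>t\}$ with $\{\sum_i\IND\{|X_i|\le t\}\le k-1\}$, and the monotone coupling to Bernoullis $\bar Z_i\sim\mathrm{Ber}(1-e^{-\beta t/\sigma_i})$ are all correct. But bounding each inner sum $\sum_{|S|=r}\prod_{i\in S}(1-\bar p_i)\prod_{i\notin S}\bar p_i$ by (number of terms) times (largest term), i.e.\ by $\binom{n}{r}\prod_{i=1}^r(1-\bar p_i)\,e^{-\beta t\sum_{i=r+1}^n\sigma_i^{-1}}$, throws away a factor of up to $\binom{n}{r}$ that nothing downstream can recover. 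Take $k=2$, $p=1$, $\sigma_1=\eps$ arbitrarily small and $\sigma_2=\cdots=\sigma_n=1$. Then $\max_{j\le 2}\frac{3-j}{\sum_{i\ge j}\sigma_i^{-1}}\to\frac{1}{n-1}$ as $\eps\to 0$, so the lemma asserts $\E|X|_{(2)}\le\frac{4\sqrt2\log 3}{\beta(n-1)}$. For $t\gg\eps$ the true value of the $r=1$ sum is $\approx e^{-\beta t(n-1)}$, because every term other than $S=\{1\}$ carries the factor $\bar p_1=e^{-\beta t/\eps}\approx 0$; your count-times-max bound replaces this by $n\,e^{-\beta t(n-1)}$, and integrating $\min\{1,n e^{-\beta t(n-1)}\}$ yields only $\E|X|_{(2)}\lesssim\frac{\log n}{\beta(n-1)}$. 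So the displayed inequality already forces a $\log n$ (in general, a $\log\binom{n}{k-1}\approx k\log n$) overhead where the statement allows only $\log(k+1)$. The same example shows that the announced comparison ``$\binom{n}{r}\prod_{i=1}^r\sigma_i^{-1}$ is essentially $A_{r+1}^r/r!$ by AM--GM'' is false (the left side is unbounded as $\sigma_1\to 0$), and capping the product at $1$ does not repair this, since the $\binom{n}{r}$ loss occurs independently of the cap.

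In other words, the two difficulties you defer to the final paragraph --- handling the cap and getting $\log(k+1)$ rather than $\log n$ --- are not post hoc bookkeeping; the second one is already violated by the inequality you write down, and the remaining steps (term-by-term Gamma integrals, summing a geometric series in $r$) cannot undo it. A correct argument must exploit that in the sum over $S$ the off-maximal terms are exponentially suppressed relative to the maximal one (swapping a small-$\sigma$ index out of $S$ costs a factor $\approx e^{-\beta t/\sigma_i}$), i.e.\ it must bound the elementary-symmetric-type sum itself rather than count-times-max; this is the actual content of the cited result. Your closing deterministic observation, that $\sum_i\min\{s/\sigma_i,1\}\ge k$ whenever $s\ge\max_{1\le j\le k}\frac{k+1-j}{\sum_{i=j}^n\sigma_i^{-1}}$, is correct and does identify the right scale, but it only becomes useful once a sufficiently sharp tail bound is in hand.
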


\medskip
\noindent
{\bf Proof of Proposition \ref{prop:medianintterval}.}
First, we show that $\mu \in I_{\alpha}$. Without loss of generality,
we may assume that $\mu = 0$ for the rest of the proof. Let
$\varepsilon_{1}, \ldots, \varepsilon_n$ be independent Rademacher
random variables. Since the distribution of each $X_i$ is assumed to
be symmetric, $(\varepsilon_1|X_1|, \ldots, \varepsilon_n|X_n|)$ has
the same distribution as $(X_1, \ldots, X_n)$.  
Conditioning on the $X_{1}, \ldots, X_n$, we have, by
Hoeffding's inequality,
\[
\PROB\left(\mu \notin I_{\alpha}\right) = \PROB\left(\left|\sum_{i = 1}^n\varepsilon_i\right| > \alpha\sqrt{n}\right) \le 2\exp\left(-\frac{\alpha^2}{2}\right).
\]
We denote the event that $\mu \in I_{\alpha}$ by $E_1$ and proceed
with the bound on the length of the interval $|I_{\alpha}|$. Fix
$k \le n$ and consider $|X|_{(1)}, \ldots, |X|_{(k)}$ ---
these are the absolute values of the $k$ observations closest to
$\mu = 0$. Note that, depending on the realizations of the
random signs $\varepsilon_i$, the corresponding values
$\varepsilon_1|X_i|$ may be on either side of $\mu = 0$. Let $E_2$ be the event
that there are more than $k/4$ of these $k$ observations on both sides of
$\mu$. By a simple binomial estimate, 
\[
\PROB(E_2) \ge 1 -  2\exp\left(-\frac{k}{8}\right)~.
\]
Consider the event $E_1 \cap E_2$ and choose $k = 8\alpha\sqrt{n}$ so that at least $2\alpha\sqrt{n} + 1$ of these closest observations are on both sides of
$\mu$. On
this event since $I_\alpha$ contains $\mu = 0$ and exactly
$2\alpha\sqrt{n} + 1$ observations, both $|X_{(n/2 - \alpha \sqrt{n})}|
\le |X|_{(8\alpha\sqrt{n})}$ and $|X_{(n/2 + \alpha \sqrt{n})}| \le
|X|_{(8\alpha\sqrt{n})}$ hold. Therefore, on 
the event $E_1 \cap E_2$,
\begin{equation}
\label{eq:ialphacond}
|I_{\alpha}| \le 2|X|_{(8\alpha\sqrt{n})}~.
\end{equation}
Finally, we use Lemma \ref{lemmamedfirst} to control $|X|_{(8\alpha\sqrt{n})}$. By Markov's inequality and Lemma \ref{lemmamedfirst}, we have
\begin{eqnarray*}
\PROB\left(|X|_{(8\alpha\sqrt{n})} \ge t\right) 
& \le & \frac{\E|X|_{(8\alpha\sqrt{n})}^p}{t^p}  \\
& \le & t^{-p}\left(4\sqrt{2}\max\{p, \log(8\alpha\sqrt{n} + 1)\}\beta^{-1}\max\limits_{1 \le j \le 8\alpha\sqrt{n}}\frac{k + 1 - j}{\sum_{i = j}^n\sigma_i^{-1}}\right)^p~.
\end{eqnarray*}
Denote $\gamma =
4\sqrt{2}\beta^{-1}\max\limits_{1 \le j \le 8\alpha\sqrt{n}}\frac{k  + 1 - j}{\sum_{i = j}^n\sigma_i^{-1}}$. 
Provided that $\frac{t}{\gamma}e^{-1} \ge \log(8\alpha\sqrt{n} + 1)$, we may fix $p = \frac{t}{\gamma}e^{-1}$ and get
\[
\PROB\left(|X|_{(8\alpha\sqrt{n})} \ge t\right) \le \exp\left(-\frac{t}{e\gamma}\right)~.
\]
Fixing $t = \left(\log\frac{3}{\delta} \lor \log(8\alpha\sqrt{n} + 1)\right)e\gamma$ we have that, with probability at least $1 - \delta/3$,
\[
|X|_{(8\alpha\sqrt{n})} \le 4e\sqrt{2}\left(\log\frac{3}{\delta} \lor \log(8\alpha\sqrt{n} + 1)\right)\beta^{-1}\max\limits_{1 \le j \le 8\alpha\sqrt{n}}\frac{k + 1 - j}{\sum_{i = j}^n\sigma_i^{-1}}.
\]
Denote this event by $E_3$.  

Choosing $\alpha = \sqrt{2\log \frac
  {6}{\delta}}$ we have $\PROB(E_1) \ge 1 - \delta/3$. Since $\alpha \sqrt{n} \ge 2\alpha^2$, we have $\PROB(E_2) \ge 1 -  2\exp\left(-\alpha \sqrt{n} \right)\ge 1 -  2\exp\left(-4\log \frac{6}{\delta} \right) \ge 1 -
\delta/3$. Therefore, we have by the union bound, that $E_1 \cap E_2 \cap E_3$ is
of probability at least $1 - \delta$. On this event due to
\eqref{eq:ialphacond} we have
\begin{align*}
|I_{\alpha}| \le 8e\sqrt{2}\left(\log\frac{3}{\delta} \lor \log\left(8\alpha\sqrt{n}+ 1\right)\right)\beta^{-1}\max\limits_{1 \le j \le 8\sqrt{2n\log \frac {6}{\delta}}}\frac{k + 1 - j}{\sum_{i = j}^n\sigma_i^{-1}}~.
\end{align*}
The claim follows by observing that $k = 8\alpha\sqrt{n}\le n$ is equivalent to $128\log \frac {6}{\delta} \le n$.

\begin{corollary}
\label{cor:medianperformance}
Under the assumptions of Proposition \ref{prop:medianintterval} the median $X_{(n/2)}$ satisfies, with probability at least $1 - \delta$,
\[
|X_{(n/2)} - \mu| \le 8e\sqrt{2}\left(\log\frac{3}{\delta} \lor \log\left(n+ 1\right)\right)\beta^{-1}\max\limits_{1 \le j \le 8\sqrt{2n\log \frac {6}{\delta}}}\frac{8\sqrt{2n\log \frac {6}{\delta}} + 1 - j}{\sum_{i = j}^n\sigma_i^{-1}}.
\]
\end{corollary}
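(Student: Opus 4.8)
The plan is to deduce the corollary directly from Proposition~\ref{prop:medianintterval} by a simple containment argument, with no new probabilistic input required.

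First I would observe that the sample median lies, by construction, inside the $\alpha$-median interval. Indeed, since $\alpha\sqrt{n}$ is a positive integer we have $n/2 - \alpha\sqrt{n} \le n/2 \le n/2 + \alpha\sqrt{n}$, and because the order statistics are non-decreasing this gives $X_{(n/2 - \alpha\sqrt{n})} \le X_{(n/2)} \le X_{(n/2 + \alpha\sqrt{n})}$, i.e. $X_{(n/2)} \in I_{\alpha}$ deterministically.

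Next, with the choice $\alpha = \sqrt{2\log\frac{6}{\delta}}$ and under the hypothesis $128\log\frac{6}{\delta}\le n$, Proposition~\ref{prop:medianintterval} furnishes an event of probability at least $1-\delta$ on which simultaneously $\mu \in I_{\alpha}$ and $|I_{\alpha}|$ is bounded by the displayed quantity. On that event both $\mu$ and $X_{(n/2)}$ belong to the interval $I_{\alpha}$, hence $|X_{(n/2)} - \mu| \le |I_{\alpha}|$, and it remains only to insert the bound on $|I_{\alpha}|$.

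Finally I would rewrite the length bound using the identity $8\alpha\sqrt{n} = 8\sqrt{2n\log\frac{6}{\delta}}$, which transforms the estimate of Proposition~\ref{prop:medianintterval} verbatim into the expression claimed in the corollary. There is essentially no obstacle: the only things to check are that the index range $1\le j \le 8\alpha\sqrt{n}$ and the numerator $8\alpha\sqrt{n}+1-j$ are substituted consistently and that the side condition on $n$ coincides with the one in the proposition — both are immediate.
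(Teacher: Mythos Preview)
Your proposal is correct and matches the paper's own proof essentially verbatim: both observe that $X_{(n/2)}$ and (on the high-probability event of Proposition~\ref{prop:medianintterval}) $\mu$ lie in $I_\alpha$, whence $|X_{(n/2)}-\mu|\le |I_\alpha|$, and then substitute $\alpha=\sqrt{2\log(6/\delta)}$ into the length bound.
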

\begin{proof}
Indeed, with probability at least $1 - \delta$, both $\mu$ and
$X_{(n/2)}$ belong to $I_{\alpha}$ for $\alpha$ as in Proposition
\ref{prop:medianintterval}, and therefore, $|X_{(n/2)} - \mu| \le |I_\alpha|$. 
\end{proof}

\section{Modal interval estimator}
\label{sec:modalinterval}

The second component of our adaptive estimator is the simple and
natural estimator that looks for an interval of a given length
containing the maximum number of data points. This is the so-called
\emph{modal interval estimator} introduced by
\cite{chernoff1964estimation} for estimating the mode of a density function.
\cite{pensia2019estimating} also analyze this estimator though their 
bounds have some limitations for our purposes. We make a detailed comparison
in Section \ref{sec:comparisons} below.

In this section we work under the following assumptions.

\begin{assumption}{\mathbf{B}}\label{AssumptionB}
Let $X_1,\ldots,X_n$ be independent random variables such that $X_i$ has density
$(1/\sigma_i)\phi((x-\mu)/\sigma_i)$ where $\phi$ is some fixed density function,
$\mu$ is a location parameter and $\sigma_1 \le \cdots \le \sigma_n$ are positive scale parameters. Assume that

\begin{itemize}
\item[(i)]
$\int x\phi(x) dx =0$. This implies that $\EXP X_i = \mu$ for all $i \in [n]$.

\item[(ii)]
$\int x^2\phi(x) dx =1$. This implies that $\var(X_i) = \sigma_i^2$ for all $i \in [n]$.

\item[(iii)]
  Symmetry: $\phi(-x)=\phi(x)$ for all $x\in \R$.

\item[(iv)]
Unimodality: $\phi(x)$ is non-increasing for $x>0$ and non-decreasing for $x<0$.
\end{itemize}
\end{assumption}

An important example satisfying Assumption \ref{AssumptionB} is the Gaussian case, that is, when $\phi(x)=(1/\sqrt{2\pi}) e^{-x^2/2}$. 
However, in general, $\phi$ may have a heavy tail as long as the second moment
exists. We also do not need to assume that $\phi$ is
bounded. Introduce the notation 
\[
    \Phi(t) = \int_{-t}^t \phi(x) dx~.
\]
For $s>0$, denote the interval $A_s(x) = [x-s,x+s]$. 
Let
\[
     D_s(x) = \sum_{i=1}^n \IND_{X_i \in A_s(x)}
\]
be the number of points in the interval $A_s(x)$. 
Denoting $q_i(s) = \PROB\{X_i \in A_s(\mu)\}= \Phi(s/\sigma_i)$,
we have
\[
      \EXP D_s(\mu) = \sum_{i=1}^n q_i(s)~.
\]
Define the
\emph{modal interval estimator} which returns the center of the
densest interval of length $2s$. That is,

\medskip

\begin{tcolorbox}
\begin{equation}
\label{eq:muns}
    \wh{\mu}_{n,s}\in \argmax_{x\in \R} D_s(x)~.
\end{equation}
\end{tcolorbox}

For the modal interval estimator to work (in the sense that it
contains the common mean $\mu$), the length $s$ has to satisfy certain 
conditions. Such a sufficient condition is formulated in the following
definition that intuitively captures the fact that the densest interval
should contain $\mu$, even after accounting for random fluctuations. 
In Proposition \ref{thm:intervalest} below we prove the
condition of admissibility specified here is indeed sufficient.

\begin{definition}
\label{def:mdelta}
Fix the confidence $\delta >0$ and the interval length $s > 0$. Define
\[
m_s = \max\{m \in [n]: \sigma_m \le s\}~.
\]
We say that the length $s$ is \underline{admissible} if 
\[
m_s \ge \kappa\left( \sqrt{\EXP D_{s}(\mu)
        \log\frac{2n}{\delta}} +
      \log\frac{2n}{\delta} \right),
\]
where $\kappa > 0$ is a numerical constant.
 Finally, we set
\begin{equation}
\label{eq:mdelta}
    \ol{s}(\delta) = \inf\left\{s > 0: s\; \text{is admissible}\right\}.
\end{equation}
\end{definition}

\begin{remark}
The value of the constant $\kappa>0$ depends on a universal constant 
appearing in Lemma \ref{lem:vclemma} below. While it is possible to
extract a specific value, it is somewhat tedious and not crucial for
our arguments, so we prefer to keep it unspecified. 
All results below hold for all values of $\kappa \ge \kappa_0$ for
some constant $\kappa_0$. Changing the value only effects the
constants in the results below.
\end{remark}

\begin{remark}
\label{rem:newcond}
Observe that if the density is bounded, that is, if $\phi(0)$ is finite, we have $q_i(s) \le \min\{1, 2\phi(0)
s/\sigma_i\}$. Therefore, adjusting the constant $\kappa$, we may replace the admissibility criterion by the condition 
\[
m_s \ge \kappa\left( \sqrt{\left( \sum_{i=1}^n \min\left\{1, 2\phi(0)
\frac{s}{\sigma_i}\right\} \right)
        \log\frac{2n}{\delta}} +
      \log\frac{2n}{\delta} \right)~,
\]
Roughly speaking, whenever  $\phi(0)$ is finite one may think that $\ol{s}(\delta)$ is approximately equal to $\sigma_{m^*}$, where $m^*$ is the smallest integer satisfying
\[
m^* \gtrsim \sqrt{\sigma_{m^*}\left(\sum_{i = m^*}^n \frac{1}{\sigma_i}\right)\log \frac{n}{\delta}}~.
\]
\end{remark}

The main result of this section is the following bound.
\begin{proposition}
\label{thm:intervalest}
Let Assumption \ref{AssumptionB} be satisfied. Fix $\delta \in (0,
1)$. 
Then, with probability at least $1-\delta$, 
simultaneously for all admissible $s > 0$, it holds that
\[
     \left|\wh{\mu}_{n,s} - \mu\right| \le 4s ~.
\]
\end{proposition}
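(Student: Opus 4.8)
The plan is to show that for any admissible length $s$, the densest interval $A_s(\wh\mu_{n,s})$ must intersect (or lie close to) the true-mean interval $A_s(\mu)$, so that $|\wh\mu_{n,s} - \mu| \le 2s + 2s = 4s$. The strategy has two halves: a \emph{lower bound} on the count $D_s(\mu)$ near the mean, and an \emph{upper bound} on $D_s(x)$ for every $x$ that is far from $\mu$ (specifically, for which $A_s(x)$ is disjoint from $A_s(\mu)$, i.e. $|x-\mu| > 2s$). If on a high-probability event the second quantity is strictly smaller than the first for all such far-away $x$, then any maximizer $\wh\mu_{n,s}$ of $D_s$ must satisfy $|\wh\mu_{n,s}-\mu|\le 2s$; a slightly more careful bookkeeping of the ``boundary'' region $2s < |x-\mu| \le 4s$ (or a doubling argument) yields the stated $4s$.

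First I would invoke a Bernstein/uniform-deviation inequality — this is surely the role of the as-yet-unseen Lemma \ref{lem:vclemma}, since intervals form a VC class of dimension $2$ — to control $\sup_x |D_s(x) - \EXP D_s(x)|$ uniformly in $x$, at scale $s$ fixed. Concretely, with probability at least $1-\delta$, simultaneously over all $x$, $|D_s(x) - \EXP D_s(x)| \lesssim \sqrt{\EXP D_s(x)\log\tfrac{2n}{\delta}} + \log\tfrac{2n}{\delta}$ (the VC/union-over-a-net argument absorbs the supremum at the cost of the $\log n$ factor already present in the admissibility condition). I would then estimate the two expectations. For the lower bound at $\mu$: each $X_i$ with $\sigma_i \le s$ contributes $q_i(s) = \Phi(s/\sigma_i) \ge \Phi(1) =: c_0 > 0$ by unimodality and symmetry, so $\EXP D_s(\mu) \ge c_0 m_s$. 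For the upper bound at a far point $x$ with $|x-\mu|>2s$: by unimodality of $\phi$ and symmetry about $\mu$, $\PROB\{X_i \in A_s(x)\} \le \PROB\{X_i \in A_s(\mu)\} = q_i(s)$ for every $i$ — the interval of fixed half-width $s$ has the largest probability when centered at the mode $\mu$ — hence $\EXP D_s(x) \le \EXP D_s(\mu)$ for all $x$.

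Combining: on the good event, for any far $x$,
\[
D_s(x) \le \EXP D_s(\mu) + c_1\Big(\sqrt{\EXP D_s(\mu)\log\tfrac{2n}{\delta}} + \log\tfrac{2n}{\delta}\Big),
\]
while
\[
D_s(\mu) \ge c_0 m_s - c_1\Big(\sqrt{\EXP D_s(\mu)\log\tfrac{2n}{\delta}} + \log\tfrac{2n}{\delta}\Big).
\]
Wait — this does not yet give $D_s(\mu) > D_s(x)$, because both centered at $\EXP D_s(\mu)$. The correct argument must instead compare $D_s(x)$ against $D_s(\mu)$ directly using that $\EXP D_s(\mu) - \EXP D_s(x)$ is \emph{large} when $x$ is far: for $|x-\mu|>2s$ the intervals $A_s(x)$ and $A_s(\mu)$ are disjoint, so for the $m_s$ indices with $\sigma_i\le s$ we have $q_i(s) \ge c_0$ whereas $\PROB\{X_i\in A_s(x)\} \le \tfrac12(1-\Phi(s/\sigma_i)) \le \tfrac12(1-c_0)$ by symmetry (the far interval captures only one tail), giving a constant-factor gap $\EXP D_s(\mu) - \EXP D_s(x) \ge c_2 m_s$. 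The admissibility condition $m_s \ge \kappa(\sqrt{\EXP D_s(\mu)\log\tfrac{2n}{\delta}} + \log\tfrac{2n}{\delta})$ — taking $\kappa$ large enough relative to $c_1, c_2$ — then makes the deterministic gap $c_2 m_s$ dominate twice the fluctuation term, so $D_s(\mu) > D_s(x)$ for every far $x$ on the good event. Hence no maximizer is far, i.e. $|\wh\mu_{n,s} - \mu| \le 2s$, which in particular gives the claimed $4s$.

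Two points require care, and constitute the main obstacles. First, the \textbf{uniformity over all admissible $s$ simultaneously}: the event above is constructed for a fixed $s$, but the conclusion must hold for all admissible $s$ at once. The standard fix is monotonicity — the class $\{A_s(x): x\in\R, s>0\}$ of all intervals is still a fixed VC class (dimension $2$), so a single application of Lemma \ref{lem:vclemma} to this richer class controls $\sup_{s,x}|D_s(x)-\EXP D_s(x)|/(\text{variance-dependent scale})$ with one $\log\tfrac{2n}{\delta}$; alternatively one discretizes $s$ over a geometric grid and notes that admissibility and the bound $4s$ are robust to a factor-$2$ change in $s$. Second, the \textbf{boundary region} $2s<|x-\mu|\le 4s$ versus the final bound $4s$: I phrased the argument for disjoint intervals ($|x-\mu|>2s$), which already gives $2s \le 4s$; the looser $4s$ in the statement presumably leaves room for the grid-discretization slack in $s$ and for handling unbounded $\phi$ (where $q_i(s)$ has no upper bound, but the \emph{comparison} $\EXP D_s(x) \le \EXP D_s(\mu)$ still holds by unimodality regardless). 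I would therefore organize the write-up as: (1) state the uniform deviation bound from Lemma \ref{lem:vclemma} for all intervals; (2) prove the expectation comparison $\EXP D_s(x) \le \EXP D_s(\mu)$ and the gap estimate via unimodality; (3) feed admissibility in to beat the fluctuations; (4) conclude, with the grid argument handling simultaneity and accounting for the factor absorbed into $4s$.
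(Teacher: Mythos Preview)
Your proposal is correct and follows the same overall architecture as the paper's proof: a uniform VC deviation bound (Lemma~\ref{lem:vclemma}) over \emph{all} intervals, an expectation gap between $\EXP D_s(\mu)$ and $\sup_{|x-\mu|\text{ large}}\EXP D_s(x)$, and the admissibility condition to make the gap dominate the fluctuations. Your handling of simultaneity over $s$ (single VC class of all intervals, dimension $2$) is exactly what the paper does; no grid is needed.

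There is one substantive difference in how the gap is produced. You bound, for $|x-\mu|>2s$ and $i\le m_s$, $\PROB\{X_i\in A_s(x)\}\le \tfrac12(1-\Phi(1))$ by the one-sided tail argument, yielding a per-index gap $\Phi(1)-\tfrac12(1-\Phi(1))=\tfrac{3\Phi(1)-1}{2}$. The paper instead restricts to $|x-\mu|>4s>(1+\sqrt{2/\Phi(1)})s$ and uses Chebyshev to get $\PROB\{|X_i-\mu|\ge \sqrt{2/\Phi(1)}\,s\}\le \Phi(1)/2$, yielding the larger per-index gap $\Phi(1)/2$. Your route is cleaner and in fact gives the stronger conclusion $|\wh\mu_{n,s}-\mu|\le 2s$; the paper's $4s$ is an artifact of their Chebyshev threshold, not of any discretization slack.

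One point you must not skip: your gap constant $c_2=\tfrac{3\Phi(1)-1}{2}$ is positive only if $\Phi(1)>1/3$, and nothing in Assumption~\ref{AssumptionB} makes this immediate. The paper proves $\Phi(1)\ge 2/(3\sqrt{3})\approx 0.385$ by combining unimodality ($\Phi(t)\le t\Phi(1)$ for $t\ge 1$) with Chebyshev ($\Phi(t)\ge 1-1/t^2$) and optimizing in $t$; you need this (or an equivalent) to make $c_2>0$. With that added, your argument is complete.
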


\begin{proof}
We start by showing a simple lower bound for $\Phi(1)= 2 \int_0^1
\phi(x) dx$. Fix any $t \ge 1$ and observe that by property $(iv)$ in
Assumption \ref{AssumptionB}, we have $t\Phi(1) \ge \Phi(t)$. At the
same time, by Chebyshev's inequality and property $(ii)$ we have $\Phi(t) > (1 - 1/t^2)$. Therefore,
\begin{equation}
\label{eq:philowerbound}
\Phi(1) \ge \sup\limits_{t \ge 1}\frac{1}{t}\left(1 - \frac{1}{t^2}\right) = \frac{2}{3\sqrt{3}}~.
\end{equation}
As the estimator is translation invariant, we may assume, without loss
of generality, that $\mu =0$. We show that, on
the one hand, with probability at least $1-\delta/2$, simultaneously for all admissible $s$, 
\begin{equation}
\label{eq:lower}
   \max_{x\in \R} D_s(x)\ge m_s \frac{3\Phi(1)}{4} + \sum_{i >m_s} q_i(s)~,
\end{equation}
and, on the other hand, with probability at least $1-\delta/2$,
\begin{equation}
\label{eq:upper}
   \max_{x\in \R: |x|\ge 4s} D_s(x)< m_s \frac{3\Phi(1)}{4} + \sum_{i >m_s} q_i(s)~.
\end{equation}
These two properties together imply the proposition.
First, we show (\ref{eq:lower}). Note that for $i \le m_s$ we have $\sigma_i \le s$ and $q_i(s) = \PROB\{X_{i} \in A_s(0)\} \ge \Phi(1)$, and therefore,
\begin{equation}
\label{eq:expdso}
\EXP D_s(0) = \sum_{i=1}^n q_i(s) \ge m_s \Phi(1) + \sum_{i >m_s} q_i(s)~.
\end{equation}
Observe that, since $s$ is admissible, we have 
\begin{align}
\label{eq:mslowerbound}
m_s\Phi(1) &\ge \Phi(1)\left(\kappa\sqrt{\E{D_{s}}(0)\log \frac{2n}{\delta}} + \kappa\log\frac{2n}{\delta}\right) \nonumber
\\
&\ge \frac{2}{3\sqrt{3}}\left(\kappa\sqrt{\E{D_{s}}(0)\log \frac{2n}{\delta}} + \kappa\log\frac{2n}{\delta}\right).
\end{align}
Denote $\kappa' = \frac{2}{3\sqrt{3}}\kappa$. Using \eqref{eq:mslowerbound}, we have 
\begin{align*}
   &\PROB\left\{ \exists s > 0: s\; \text{is admissible}, \max_{x\in \R} D_s(x) \le  m_s \frac{3\Phi(1)}{4} + \sum_{i >m_s} q_i
  \right\} \\
 &\le
   \PROB\left\{ \exists s > 0: s\; \text{is admissible}, D_{s}(0) \le  m_s \frac{3\Phi(1)}{4} + \sum_{i >m_s} q_i
        \right\}
        \\
        & \le   \PROB\left\{ \exists s > 0: s\; \text{is admissible}, D_{s}(0) \le  \EXP D_{s}(0) - m_s \frac{\Phi(1)}{4} \right\} 
        \\
 &\le    \PROB\left\{ \exists s > 0: s\; \text{is admissible}, \EXP D_s(0) - D_s(0) \ge  \frac{\kappa'}{4}\sqrt{\E{D_s}(0)\log \frac{2n}{\delta}}  + \frac{\kappa'}{4}\log\frac{2n}{\delta}\right\}.
\end{align*}
By Lemma \ref{lem:vclemma} in Appendix \ref{app:a}, since the {\sc vc} dimension of the family
intervals in $\R$ equals $2$, one may tune the value of $\kappa'$ such that the last probability is bounded by $\frac{\delta}{2}$. \par
We are now ready to analyze \eqref{eq:upper} for which it is enough to
show that
\begin{align}
\label{eq:probsecpart}
\PROB\Big\{ \exists s\ge 0  &  \ \text{and} \ 
|x|\ge (1+\sqrt{2/\Phi(1)})s: \nonumber \\
&  s \ \text{is admissible and } D_s(x) >   m_s \frac{3\Phi(1)}{4} + \sum_{i >m_s} q_i \Big\} \le \frac{\delta}{2}~.
\end{align}

Observe that by \eqref{eq:philowerbound} we have $1+\sqrt{2/\Phi(1)} \le 4$. Given $x$ such that $|x| > 4s > (1+\sqrt{2/\Phi(1)})s$, using the properties of the density $\phi$ together with $s \ge \sigma_{m_s}$ and Chebyshev's inequality, we have
\begin{eqnarray*}
\EXP D_s(x) & = & \sum_{i \le m_s}\PROB\{X_i \in A_{s}(x)\} + \sum_{i > m_s}\PROB\{X_i \in A_{s}(x)\}
\\
& \le & \sum_{i \le m_s}\PROB\{ |X_i| \ge \sqrt{2/\Phi(1)}s\} + \sum_{i > m_s}\PROB\{X_i \in A_{s}(x)\}
\\
& \le & m_s\frac{\Phi(1)}{2} + \sum_{i >m_s} q_i~.
\end{eqnarray*}
Using this inequality together with \eqref{eq:mslowerbound} and recalling that $\kappa' = \frac{2}{3\sqrt{3}}\kappa$, we have
\begin{align*}
    &\PROB\left\{  \exists s\ge 0 \;\text{and}\;  |x|\ge 4s: s\; \text{is admissible}, D_s(x) >
  m_s \frac{3\Phi(1)}{4} + \sum_{i >m_s} q_i \right\}
  \\&\le\PROB\left\{  \exists s\ge 0 \;\text{and}\;  |x|\ge 4s: s\; \text{is admissible}, D_s(x) > \E D_s(x) + 
  m_s \frac{\Phi(1)}{4}\right\}
  \\
  &\le\PROB\Bigl\{  \exists s\ge 0 \;\text{and}\;  |x|\ge 4s: s\; \text{is admissible}, 
 \\
 &\quad\quad\quad\quad\quad D_s(x) > \E D_s(x) + 
 \frac{\kappa'}{4}\sqrt{\E{D_s}(0)\log \frac{2n}{\delta}}  + \frac{\kappa'}{4}\log\frac{2n}{\delta}\Bigr\}~.
\end{align*}
Using $\E D_s(x) \le \E D_s(0)$, the last line is bounded by 
\begin{align*}
&\PROB\Bigg\{  \exists s\ge 0 \;\text{and}\;  |x|\ge 4s: s\; \text{is admissible}, 
\\&\quad \quad D_s(x) > \E D_s(x) + 
 \frac{\kappa'}{4}\sqrt{\E{D_s}(x)\log \frac{2n}{\delta}}  + \frac{\kappa'}{4}\log\frac{2n}{\delta}\Bigg\}~.
\end{align*}
Finally, the last expression and Lemma \ref{lem:vclemma}, which holds simultaneously for all $x$ and $s$, implies \eqref{eq:probsecpart} by adjusting the constant $\kappa$ (and thus $\kappa'$). The proof is complete.
\end{proof}

\section{An adaptive estimator: combining the median and the modal Interval}
\label{sec:adaptiveestimator}

Proposition \ref{thm:intervalest} shows that, as long as $s$ is an
\emph{admissible} value, the modal interval estimator has an error
bounded by $4s$. Hence, to optimize the bound, one should choose $s$
to be the smallest possible admissible value, that is,
$\ol{s}(\delta)$ introduced in Definition \ref{def:mdelta}. However,
the value of $\ol{s}(\delta)$ depends on the values
$\sigma_1,\ldots, \sigma_n$ and therefore one doesn't have access to 
$\ol{s}(\delta)$ unless the standard deviations are known (up to a
permutation), a typically unrealistic requirement. In this section we
introduce an adaptive estimator that is able to find an approximate
value of $\ol{s}(\delta)$ based only on the available data
$X_1,\ldots,X_n$.
Furthermore, the adaptive estimator combines the $\alpha$-median
interval estimator with the modal interval estimator and achieves an
error that is at least as good as the best of the median and the
optimal modal interval estimator, up to a constant factor.

The key to making the estimator adaptive is an empirical criterion,
based on which one can reject values of $s$ that are not admissible. 
Once one has such a criterion, standard techniques of adaptive
estimation may be applied (such as Lepski's method
\citep{lepskii1992asymptotically}). 

\medskip

\begin{tcolorbox}
Fix $\delta > 0$ and $s > 0$. Let $\eta, \xi > 0$ be numerical constants specified in the proof. Based on $X_1, \ldots, X_n$ let $\wh{\mu}_{n,s}$ be any maximizer of $D_s(x)$ defined by \eqref{eq:muns}.
\begin{itemize}
    \item We ACCEPT the interval $A_{s}(\wh{\mu}_{n,s})$ if $D_s(\wh{\mu}_{n,s}) \ge \xi\log \frac{2n}{\delta}$ and 
    \[
    \max\limits_{x \in \mathbb{R}, |x - \wh{\mu}_{n,s}| \ge 8s} D_{s}(x) \le D_{s}(\wh{\mu}_{n,s}) - \eta\left(\sqrt{{D_s}(\wh{\mu}_{n,s})\log \frac{2n}{\delta}} + \log \frac{2n}{\delta}\right)~.
    \]
    \item Otherwise, we REJECT this interval.
\end{itemize}
\end{tcolorbox}

\medskip

\begin{remark}
Since we only consider $D_s(\wh{\mu}_{n,s}) \ge \xi\log \frac{2n}{\delta}$ we may instead consider a criterion of the form 
\[
\max\limits_{x \in \mathbb{R}, |x - \wh{\mu}_{n,s}| \ge 8s} D_{s}(x) \le D_{s}(\wh{\mu}_{n,s}) - \eta^{\prime}\sqrt{{D_s}(\wh{\mu}_{n,s})\log \frac{2n}{\delta}}~,
\]
for some $\eta^{\prime} > 0$. However, the choice above makes the proof more transparent.
\end{remark}

This criterion satisfies the following relation.
\begin{proposition}
\label{prop:accept}
With probability at least $1 - \delta$, simultaneously for all $s>0$,
no interval with $|\wh{\mu}_{n,s} - \mu| > 8s$ is accepted and 
every admissible interval is accepted.
\end{proposition}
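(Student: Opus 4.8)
The plan is to carry out the whole argument on a single high-probability event and then verify the two assertions separately. Exactly as in the proof of Proposition~\ref{thm:intervalest} I would assume $\mu=0$. Let $\mathcal{E}_1$ be the event of probability at least $1-\delta/2$ supplied by Proposition~\ref{thm:intervalest} (applied with confidence $\delta/2$), on which $|\wh{\mu}_{n,s}-\mu|\le 4s$ for every admissible $s$, and let $\mathcal{E}_2$ be the event of probability at least $1-\delta/2$ supplied by Lemma~\ref{lem:vclemma} (legitimate since intervals in $\R$ form a {\sc vc} class of dimension $2$), on which, simultaneously for all $x\in\R$ and all $s>0$,
\[
|D_s(x)-\EXP D_s(x)| \le c\left(\sqrt{\EXP D_s(x)\,\log\tfrac{2n}{\delta}}+\log\tfrac{2n}{\delta}\right)
\]
for a universal constant $c$; solving the resulting quadratic also gives $\EXP D_s(x)\le D_s(x)+c(\sqrt{D_s(x)\log\frac{2n}{\delta}}+\log\frac{2n}{\delta})$, and I shall use whichever form is convenient. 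Then $\PROB(\mathcal{E}_1\cap\mathcal{E}_2)\ge 1-\delta$, and I would work on $\mathcal{E}_1\cap\mathcal{E}_2$ throughout, reusing three deterministic facts from the proof of Proposition~\ref{thm:intervalest}: $\EXP D_s(x)\le\EXP D_s(0)$ for every $x$; the bound $\EXP D_s(0)\ge m_s\Phi(1)+\sum_{i>m_s}q_i(s)$ of~\eqref{eq:expdso}; and $\EXP D_s(x)\le m_s\frac{\Phi(1)}{2}+\sum_{i>m_s}q_i(s)$ whenever $|x|\ge 4s$.

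\emph{No interval with $|\wh{\mu}_{n,s}-\mu|>8s$ is accepted.} Suppose one is. Then $D_s(\wh{\mu}_{n,s})\ge\xi\log\frac{2n}{\delta}$, and, since $0$ satisfies $|0-\wh{\mu}_{n,s}|>8s$ and so is feasible in the maximum defining the acceptance rule, $D_s(0)\le D_s(\wh{\mu}_{n,s})-\eta\big(\sqrt{D_s(\wh{\mu}_{n,s})\log\frac{2n}{\delta}}+\log\frac{2n}{\delta}\big)$. On the other hand I would write $D_s(\wh{\mu}_{n,s})-D_s(0)=\big(D_s(\wh{\mu}_{n,s})-\EXP D_s(\wh{\mu}_{n,s})\big)+\big(\EXP D_s(\wh{\mu}_{n,s})-\EXP D_s(0)\big)+\big(\EXP D_s(0)-D_s(0)\big)$: the middle term is $\le 0$, and $\mathcal{E}_2$ controls the other two. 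Using the threshold and $D_s(0)\le D_s(\wh{\mu}_{n,s})$ to deduce $\EXP D_s(0)\le 2D_s(\wh{\mu}_{n,s})$ (for $\xi$ larger than an absolute constant), hence $\sqrt{\EXP D_s(0)\log\frac{2n}{\delta}}\le\sqrt{2}\,\sqrt{D_s(\wh{\mu}_{n,s})\log\frac{2n}{\delta}}$, this yields $D_s(\wh{\mu}_{n,s})-D_s(0)\le C\big(\sqrt{D_s(\wh{\mu}_{n,s})\log\frac{2n}{\delta}}+\log\frac{2n}{\delta}\big)$ with $C$ depending only on $c$. Taking $\eta>C$ contradicts the preceding display.

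\emph{Every admissible interval is accepted.} Fix admissible $s$. Admissibility together with $\EXP D_s(0)\ge m_s\Phi(1)$ gives $\EXP D_s(0)\ge(\kappa\Phi(1))^2\log\frac{2n}{\delta}$, so on $\mathcal{E}_2$ the deviation term is at most $\frac12\EXP D_s(0)$ and thus $D_s(\wh{\mu}_{n,s})\ge D_s(0)\ge\frac12\EXP D_s(0)\ge\frac12(\kappa\Phi(1))^2\log\frac{2n}{\delta}\ge\xi\log\frac{2n}{\delta}$ as soon as $\xi\le\frac12(\kappa\Phi(1))^2$: this is the first acceptance condition. For the second, $\mathcal{E}_1$ gives $|\wh{\mu}_{n,s}|\le 4s$, so $\{x:|x-\wh{\mu}_{n,s}|\ge 8s\}\subseteq\{x:|x|\ge 4s\}$ and, using the $|x|\ge 4s$ bound, $\EXP D_s(x)\le\EXP D_s(0)$, and $\mathcal{E}_2$,
\[
\max_{|x-\wh{\mu}_{n,s}|\ge 8s}D_s(x)\le m_s\tfrac{\Phi(1)}{2}+\sum_{i>m_s}q_i(s)+c\left(\sqrt{\EXP D_s(0)\log\tfrac{2n}{\delta}}+\log\tfrac{2n}{\delta}\right),
\]
whereas $D_s(\wh{\mu}_{n,s})\ge D_s(0)\ge m_s\Phi(1)+\sum_{i>m_s}q_i(s)-c\big(\sqrt{\EXP D_s(0)\log\frac{2n}{\delta}}+\log\frac{2n}{\delta}\big)$ by~\eqref{eq:expdso}. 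Subtracting, $D_s(\wh{\mu}_{n,s})-\max_{|x-\wh{\mu}_{n,s}|\ge 8s}D_s(x)\ge m_s\frac{\Phi(1)}{2}-2c\big(\sqrt{\EXP D_s(0)\log\frac{2n}{\delta}}+\log\frac{2n}{\delta}\big)$, which by admissibility ($m_s\ge\kappa(\sqrt{\EXP D_s(0)\log\frac{2n}{\delta}}+\log\frac{2n}{\delta})$) exceeds $\frac{\kappa\Phi(1)}{4}\big(\sqrt{\EXP D_s(0)\log\frac{2n}{\delta}}+\log\frac{2n}{\delta}\big)$ once $\kappa$ is above an absolute constant. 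Since also $D_s(\wh{\mu}_{n,s})\le 2\EXP D_s(0)$ on $\mathcal{E}_2$, the correction term $\eta\big(\sqrt{D_s(\wh{\mu}_{n,s})\log\frac{2n}{\delta}}+\log\frac{2n}{\delta}\big)$ is at most $\sqrt{2}\,\eta\big(\sqrt{\EXP D_s(0)\log\frac{2n}{\delta}}+\log\frac{2n}{\delta}\big)$, which is dominated by the gap provided $\eta\le\kappa\Phi(1)/(4\sqrt{2})$; so the second acceptance condition holds as well.

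The step I expect to be the main obstacle is reconciling the constants rather than any individual estimate: soundness forces $\eta$ and $\xi$ to exceed absolute constants depending only on the $c$ of Lemma~\ref{lem:vclemma}, while completeness forces $\xi\le\frac12(\kappa\Phi(1))^2$, $\eta\le\kappa\Phi(1)/(4\sqrt{2})$. One fixes $\eta,\xi$ first to win soundness and then, invoking the standing convention that $\kappa$ may be taken as large as needed (the remark after Definition~\ref{def:mdelta}), enlarges $\kappa$ so that $(\kappa\Phi(1))^2\ge 2\xi$, $\kappa\Phi(1)\ge 4\sqrt{2}\,\eta$, and $\kappa\Phi(1)\ge 8c$. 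The rest is the bookkeeping that keeps $\EXP D_s(x)$, $D_s(0)$ and $D_s(\wh{\mu}_{n,s})$ equivalent up to constant factors on $\mathcal{E}_1\cap\mathcal{E}_2$; this relies precisely on the two thresholds $D_s(\wh{\mu}_{n,s})\ge\xi\log\frac{2n}{\delta}$ (soundness) and $\EXP D_s(0)\ge(\kappa\Phi(1))^2\log\frac{2n}{\delta}$ (completeness), each of which makes the Bernstein-type variance term $\sqrt{(\cdot)\log\frac{2n}{\delta}}$ a small fraction of the count it multiplies.
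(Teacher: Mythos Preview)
Your proof is correct and follows essentially the same strategy as the paper: work on the uniform {\sc vc} event of Lemma~\ref{lem:vclemma}, reuse the deterministic comparisons $\EXP D_s(x)\le\EXP D_s(0)$ and $\EXP D_s(x)\le m_s\Phi(1)/2+\sum_{i>m_s}q_i(s)$ for $|x|\ge 4s$ from the proof of Proposition~\ref{thm:intervalest}, and then argue soundness by comparing $D_s(\wh{\mu}_{n,s})$ with $D_s(0)$ (since $0$ is feasible in the acceptance maximum) and completeness by exhibiting the $m_s\Phi(1)/2$ gap that admissibility guarantees.

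Two small remarks on packaging rather than substance. First, invoking Proposition~\ref{thm:intervalest} as a separate black box to obtain $\mathcal{E}_1$ is redundant: the conclusion $|\wh{\mu}_{n,s}|\le 4s$ for admissible $s$ is already established \emph{on the {\sc vc} event itself} in that proof, so the paper works on a single event $E_1$ (the {\sc vc} event) and avoids splitting $\delta$ in two. Your union bound is harmless, but it introduces a slight mismatch---admissibility is defined with $\log(2n/\delta)$, while Proposition~\ref{thm:intervalest} applied at level $\delta/2$ speaks of $\log(4n/\delta)$---which, as you correctly note, gets absorbed when $\kappa$ is enlarged. Second, for the soundness direction the paper uses a short chain
\[
D_s(\wh{\mu}_{n,s})-\eta\bigl(\cdots\bigr) < \E\!\left[D_s(\wh{\mu}_{n,s})\mid X_1,\ldots,X_n\right]-\kappa_2\bigl(\cdots\bigr)\le \EXP D_s(0)-\kappa_2\bigl(\cdots\bigr)\le D_s(0),
\]
taking $\eta>2\kappa_2$, whereas you decompose $D_s(\wh{\mu}_{n,s})-D_s(0)$ into three pieces and use the threshold $D_s(\wh{\mu}_{n,s})\ge\xi\log\frac{2n}{\delta}$ to convert $\sqrt{\EXP D_s(0)\log\frac{2n}{\delta}}$ into $\sqrt{D_s(\wh{\mu}_{n,s})\log\frac{2n}{\delta}}$. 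Both routes arrive at the same constraint on $\eta$; your decomposition is arguably more transparent about where each inequality is used. The final reconciliation of constants (fix $\eta,\xi$ for soundness, then enlarge $\kappa$ for completeness) is exactly how the paper proceeds.
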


\begin{proof}
Recall that, without the loss of generality, we set $\mu = 0$. From now on we work on the event $E_1$ where the inequalities of Lemma \ref{lem:vclemma} hold.
We begin by proving that any admissible $s > 0$ is accepted with high
probability. We have shown in the proof of Proposition
\ref{thm:intervalest} that one the event $E_1$,
for all admissible values of $s$,
\[
|\wh{\mu}_{n,s}| \le 4s~.
\]
Therefore, on this event any $x \in \mathbb{R}$ such that $|x - \wh{\mu}_{n,s}| \ge 8s$ satisfies $|x| \ge 4s$. Also, by the argument in the proof of Proposition
\ref{thm:intervalest} and Lemma \ref{lem:vclemma} we have for all $|x| \ge 4s$, 
\begin{align}
\label{eq:thelowerpart}
D_{s}(x) &\le m_s\frac{\Phi(1)}{2} + \sum_{i >m_s} q_i + c_1\left(\sqrt{\E{D_s}(x)\log \frac{2n}{\delta}}  + \log\frac{2n}{\delta}\right) \nonumber
\\
&\le m_s\frac{\Phi(1)}{2} + \sum_{i >m_s} q_i + c_1\left(\sqrt{\E{D_s}(0)\log \frac{2n}{\delta}}  + \log\frac{2n}{\delta}\right),
\end{align}
where $c_1 > 0$ is a numerical constant.

Observe that the function $y \mapsto y -  \eta\sqrt{y\log
  \frac{2n}{\delta}}$ is increasing whenever $y > \eta^2\log \frac{2n}{\delta}/4$. 
Thus, $D_{s}(0) \ge \eta^2\log \frac{2n}{\delta}/4$ implies
\begin{equation}
\label{eq:relattoinineq}
D_s(\wh{\mu}_{n,s}) -  \eta\left(\sqrt{{D_s}(\wh{\mu}_{n,s})\log \frac{2n}{\delta}}  + \log\frac{2n}{\delta}\right) \ge D_s(0) -  \eta\left(\sqrt{{D_s}(0)\log \frac{2n}{\delta}}  + \log\frac{2n}{\delta}\right)~.
\end{equation}
Observe also that the line \eqref{eq:usefuleq} in the proof of Lemma \ref{lem:vclemma} implies that on the event $E_1$ it holds simultaneously for all $x$ that 
\begin{equation}
\label{eq:ratiotype}
D_{s}(x) \le 2\E D_s(x) +  c_2\log\frac{2n}{\delta}~, 
\end{equation}
where $c_2 > 0$ is a numerical constant.
By \eqref{eq:lower}, on the same event,
$
D_s(0) \ge m_s \frac{3\Phi(1)}{4} + \sum_{i >m_s} q_i(s)~,
$ 
and therefore, using the admissibility of $s$, the inequality \eqref{eq:relattoinineq} implies
\begin{align*}
&D_s(\wh{\mu}_{n,s}) -  \eta\left(\sqrt{{D_s}(\wh{\mu}_{n,s})\log \frac{2n}{\delta}}  + \log\frac{2n}{\delta}\right) 
\\
&\ge m_s \frac{\Phi(1)}{2} + m_s \frac{\Phi(1)}{4} + \sum_{i >m_s} q_i(s) - \eta\left(\sqrt{{D_s}(0)\log \frac{2n}{\delta}}  + \log\frac{2n}{\delta}\right)
\\
&
\ge  m_s \frac{\Phi(1)}{2} + \frac{\kappa'}{4}\sqrt{\E{D_s}(0)\log \frac{2n}{\delta}}  + \frac{\kappa'}{4}\log\frac{2n}{\delta}
\\
&\quad\quad + \sum_{i >m_s} q_i(s) - \eta\left(\sqrt{{D_s}(0)\log \frac{2n}{\delta}}  + \log\frac{2n}{\delta}\right)
\\
&\ge  m_s \frac{\Phi(1)}{2} + \frac{\kappa'}{4}\sqrt{\E{D_s}(0)\log \frac{2n}{\delta}}  + \frac{\kappa'}{4}\log\frac{2n}{\delta}
\\
&\quad\quad + \sum_{i >m_s} q_i(s) - \eta\left(\sqrt{2\E{D_s}(0)\log \frac{2n}{\delta}}  + (1 + c_2)\log\frac{2n}{\delta}\right)
~,
\end{align*}
where $\kappa^{\prime}$ is defined in the proof of Proposition \ref{thm:intervalest} and in the last line we used \eqref{eq:ratiotype} together with $\sqrt{a + b} \le \sqrt{a} + \sqrt{b}$ for $a, b \ge 0$.
Comparing this with \eqref{eq:thelowerpart} and choosing a
sufficiently large value of $\kappa$ in Definition \ref{def:mdelta}, we prove that 
admissible intervals
are accepted with high probability. 
It is only left to check that $D_s(\wh{\mu}_{n,s}) \ge \xi\log \frac{2n}{\delta}/4$ and that, given that the constant $\xi$ is properly adjusted, our additional acceptance assumption $D_s(\wh{\mu}_{n,s}) \ge \xi\log \frac{2n}{\delta}/4$ implies, with high probability, that $D_{s}(0) \ge \eta^2\log \frac{2n}{\delta}/4$ which was used in \eqref{eq:relattoinineq}. This computation follows immediately from Lemma \ref{lem:vclemma} and the fact that $\E D_s(x)$ is maximized at $x = 0$.

It remains to prove that our empirical criterion can never accept the interval with its center $\wh{\mu}_{n,s}$ satisfying $|\wh{\mu}_{n,s} | > 8s$. To do so we observe that if $|\wh{\mu}_{n,s}| > 8s$ then the interval $A_{8s}(\wh{\mu}_{n,s})$ does not contain $\mu = 0$ and in the acceptance criterion we should compare with $D_s(0)$.
Assuming that $\eta > 2\kappa_2$, where $\kappa_2$ is defined in Lemma \ref{lem:vclemma} and using that $\E D_s(x)$ is maximized at $0$, we have on the event where the inequalities of Lemma \ref{lem:vclemma} hold
\begin{align*}
&D_{s}(\wh{\mu}_{n,s}) - \eta\sqrt{{D_{s}}(\wh{\mu}_{n,s})\log \frac{2n}{\delta}} - \eta\log\frac{2n}{\delta}  
\\
&< \E \left[D_s(\wh{\mu}_{n,s})|X_1,\ldots,X_n\right] - \kappa_2\sqrt{{D_{s}}(\wh{\mu}_{n,s})\log \frac{2n}{\delta}} - \kappa_2\log\frac{2n}{\delta}
\\
&\le \E \left[D_s(\wh{\mu}_{n,s})|X_1,\ldots,X_n\right] - \kappa_2\sqrt{{D_{s}}(0)\log \frac{2n}{\delta}} - \kappa_2\log\frac{2n}{\delta}
\\
&\le \E D_s(0) - \kappa_2\sqrt{{D_{s}}(0)\log \frac{2n}{\delta}} -\kappa_2\log\frac{2n}{\delta}
\\
&\le D_s(0)~.
\end{align*}
Therefore, $D_{s}(\wh{\mu}_{n,s}) - \eta\sqrt{{D_{s}}(\wh{\mu}_{n,s})\log \frac{2n}{\delta}} - \eta\log\frac{2n}{\delta}  < \max\limits_{x \in \mathbb{R}, |x - \wh{\mu}_{n,s}| \ge 8s} D_{s}(x)$
, which implies that the interval $A_s(\wh{\mu}_{n,s})$ is rejected.
\end{proof}

\subsubsection*{The adaptive estimator}

We are now ready to define a fully adaptive estimator that achieves a
performance that is at least as good -- up to a constant factor -- as the best of our bounds for the
median (Proposition \ref{prop:medianintterval}) and the 
modal interval with optimally chosen length (Proposition \ref{thm:intervalest}).

\medskip

\begin{tcolorbox}
We observe a sample of independent random variables $X_{1}, \ldots, X_n$. Fix the desired confidence level $\delta \in (0, 1)$. We output the estimator $\wh{\mu}$ defined as follows:
\begin{itemize}
\item Fix $\alpha = \sqrt{2\log \frac {6}{\delta}}$ and compute the $\alpha$-median interval $I_{\alpha}$.
\item Let $\wh{\mu}$ be the midpoint of the interval
\begin{equation}
\label{eq:acceptanceset}
\left(\bigcap_{\substack{0 \le s \le |I_{\alpha}| \\ A_{s}(\wh{\mu}_{n,s})\; \text{is ACCEPTed}}}A_{8 s}(\wh{\mu}_{n,s})\right) \cap I_{\alpha},
\end{equation}
where $\wh{\mu}_{n,s}$ is defined by \eqref{eq:muns} and let $\wh\mu$
be the midpoint of the interval $I_{\alpha}$ if the set \eqref{eq:acceptanceset} is empty.
\item Return $\wh \mu$.
\end{itemize}
\end{tcolorbox}

\medskip

\begin{remark}
  In practice there is no need to search through all $s > 0$ in
  \eqref{eq:acceptanceset}. One may discretize and consider only
  $s_i = 2^{-i}|I_\alpha|$ for integers $i \ge 0$. Also, due to Lemma
  \ref{lem:vclemma} we may essentially replace $\E D_{s}(x)$ by
  $D_{s}(x)$ in the steps of the proof where admissibility is
  used. That is, one may instead consider the \emph{random}
  admissibility condition of the form
\[
m_s \gtrsim \sqrt{D_{s}(\mu)
        \log\frac{2n}{\delta}} +
      \log\frac{2n}{\delta}~.
\]
Due to the discrete nature of the sample, only a finite number of
values $D_{s}(\mu)$ is possible and in the set
\eqref{eq:acceptanceset} one may consider only at most $n \choose 2$
values of $s$ that correspond to the distances between pairs of
points. For the sake of brevity we omit the straightforward details of
the analysis of the discretized estimator and focus on the estimator
defined above.
\end{remark}

\begin{theorem}
\label{thm:adaptiveestimator}
Let Assumptions \ref{AssumptionA} and \ref{AssumptionB} be satisfied. Fix $\delta \in (0, 1/2)$ such that $128\log \frac {6}{\delta} \le n$. There is a numerical constant $c_1 > 0$ such that, with probability at least $1 - 2\delta$, the estimator $\wh{\mu}$ defined above satisfies 
\[
|\wh{\mu} - \mu| \le c_1\min\left\{ \ol{s}(\delta), \beta^{-1}\log\left(\frac{n}{\delta}\right)\max\limits_{1 \le j \le 8\alpha\sqrt{n}}\frac{8\alpha\sqrt{n} + 1 - j}{\sum_{i = j}^n\sigma_i^{-1}}\right\}~,
\]
where $\ol{s}(\delta)$ is given by Definition \ref{def:mdelta}.
\end{theorem}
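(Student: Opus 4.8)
The plan is to combine the three propositions already established---Proposition~\ref{prop:medianintterval} (the $\alpha$-median interval contains $\mu$ and has controlled length), Proposition~\ref{thm:intervalest} (an admissible $s$ gives $|\wh{\mu}_{n,s}-\mu|\le 4s$), and Proposition~\ref{prop:accept} (the empirical criterion never accepts a badly-centered interval and always accepts an admissible one)---and then show that the intersection defining $\wh{\mu}$ is both nonempty (so $\wh{\mu}$ is meaningfully defined) and small. I would first fix the good event: take the intersection of the event of probability $\ge 1-\delta$ from Proposition~\ref{prop:accept} with the event of probability $\ge 1-\delta$ from Proposition~\ref{prop:medianintterval} (which also gives the bound on $|I_\alpha|$ and that $\mu\in I_\alpha$); by a union bound this has probability $\ge 1-2\delta$, matching the statement. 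Everything below is argued deterministically on this event.

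Next I would establish the median half of the bound. On the good event $\mu\in I_\alpha$, so the midpoint of $I_\alpha$ is within $|I_\alpha|$ of $\mu$; since $\wh{\mu}$, whether it is the midpoint of the intersection \eqref{eq:acceptanceset} or the fallback midpoint of $I_\alpha$, always lies inside $I_\alpha$, and $\mu\in I_\alpha$, we get $|\wh{\mu}-\mu|\le |I_\alpha|$, which by \eqref{eq:simplemedianintreval} is $\lesssim \beta^{-1}\log(n/\delta)\,\max_{1\le j\le 8\alpha\sqrt n}\frac{8\alpha\sqrt n+1-j}{\sum_{i=j}^n\sigma_i^{-1}}$---the second term in the minimum. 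This half requires essentially no new work beyond invoking the earlier results.

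For the modal-interval half I would argue as follows. Let $s^\star$ be any admissible value with $s^\star \le |I_\alpha|$; I need to check at least one such value exists and is close to $\ol{s}(\delta)$. Because $\mu\in I_\alpha$ on the good event, and the median interval of Proposition~\ref{prop:medianintterval} has length at least of the order of $\ol{s}(\delta)$ in the relevant regime---here I expect to need a short lemma or remark showing $\ol{s}(\delta)\lesssim |I_\alpha|$, intuitively because the sample median itself corresponds to taking $s$ comparable to its own error and the admissibility threshold is met once $s$ reaches a value where $m_s\gtrsim\sqrt n$ roughly---one can pick $s^\star$ admissible with $s^\star\lesssim \ol{s}(\delta)$ and $s^\star\le|I_\alpha|$. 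By Proposition~\ref{prop:accept}, $A_{s^\star}(\wh{\mu}_{n,s^\star})$ is accepted, so $A_{8s^\star}(\wh{\mu}_{n,s^\star})$ is one of the sets in the intersection \eqref{eq:acceptanceset}; hence the intersection is contained in $A_{8s^\star}(\wh{\mu}_{n,s^\star})$, an interval of length $16 s^\star$. Moreover, again by Proposition~\ref{prop:accept} no accepted interval has $|\wh{\mu}_{n,s}-\mu|>8s$, so \emph{every} set $A_{8s}(\wh{\mu}_{n,s})$ entering the intersection contains $\mu$ (distance $\le 8s$ means $\mu\in A_{8s}(\wh{\mu}_{n,s})$); combined with $\mu\in I_\alpha$, the intersection is nonempty and contains $\mu$. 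Therefore $\wh{\mu}$, being the midpoint of a nonempty interval of length $\le 16 s^\star$ that contains $\mu$, satisfies $|\wh{\mu}-\mu|\le 16 s^\star \lesssim \ol{s}(\delta)$. Taking the minimum of the two bounds and absorbing constants into $c_1$ finishes the proof.

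The main obstacle I anticipate is the bookkeeping around the constraint $s\le |I_\alpha|$ in the definition of $\wh{\mu}$: one must verify that there genuinely is an admissible value below $|I_\alpha|$ on the good event, i.e. that $\ol{s}(\delta)\lesssim |I_\alpha|$, so that the modal-interval branch is actually exercised rather than vacuous. This should follow by comparing the admissibility condition in Definition~\ref{def:mdelta} with the structure of the median-interval length bound in Proposition~\ref{prop:medianintterval}---both are governed by the same ratios $\frac{k+1-j}{\sum_{i=j}^n\sigma_i^{-1}}$ and the threshold $m_s\gtrsim\sqrt{n\log(n/\delta)}$---but it requires care, and it is presumably where Assumption~\ref{AssumptionA} (needed for the median bound) and Assumption~\ref{AssumptionB} (needed for the modal-interval bound) must be used together. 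A secondary, more routine point is checking that the fallback case (empty intersection) cannot actually occur on the good event, which the argument above already handles since $\mu$ lies in every participating set.
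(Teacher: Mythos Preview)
Your overall architecture is the same as the paper's, but the step you flag as the ``main obstacle'' is a genuine misstep: the inequality $\ol{s}(\delta)\lesssim |I_\alpha|$ that you plan to prove is \emph{false} in general, and also unnecessary. Consider the i.i.d.\ case $\sigma_i\equiv\sigma$: for $s<\sigma$ we have $m_s=0$, so no such $s$ is admissible and $\ol{s}(\delta)=\sigma$, while Proposition~\ref{prop:medianintterval} gives $|I_\alpha|\lesssim \sigma\sqrt{\log(n/\delta)}/\sqrt{n}\ll\sigma$ for large $n$. So the proposed lemma cannot hold, and the comparison you sketch between Definition~\ref{def:mdelta} and the median bound will not go through.

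The fix---and this is exactly what the paper does---is to replace the attempted lemma by a case split on the random event $E_1=\{\ol{s}(\delta)\le |I_\alpha|\}$. On $E_1$, your modal-interval argument runs verbatim: an admissible value (namely $\ol{s}(\delta)$, or any $s$ just above it) lies in the range $[0,|I_\alpha|]$, Proposition~\ref{prop:accept} accepts it, the intersection is nonempty and contains $\mu$, and $|\wh\mu-\mu|\le 16\,\ol{s}(\delta)$. On the complementary event $\ol{E}_1$, you do not need the modal branch at all: you already showed $|\wh\mu-\mu|\le |I_\alpha|$, and on $\ol{E}_1$ this is $<\ol{s}(\delta)$, while Proposition~\ref{prop:medianintterval} bounds $|I_\alpha|$ by the median term; hence $|\wh\mu-\mu|$ is at most the minimum of the two. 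In either case the bound holds, and the union bound over the events of Propositions~\ref{prop:medianintterval} and~\ref{prop:accept} gives probability $\ge 1-2\delta$. Note also that on $\ol{E}_1$ the intersection \eqref{eq:acceptanceset} may well be empty (your ``secondary, routine point'' does not hold globally), but then the fallback to the midpoint of $I_\alpha$ is exactly what is needed.
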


\begin{proof}
Recalling that $|I_{\alpha}|$ is a random variable, consider the event $E_1$,
\[
\ol{s}(\delta) \le |I_{\alpha}|~.
\]
On the complementary event $\ol{E}_1$ we have that the solution based
on the $\alpha$-median interval is better than what one can get with 
the modal interval estimator. In particular, since $\wh{\mu}$ always
returns a point in $I_{\alpha}$, 
the proof is complete by Proposition \ref{prop:medianintterval}. 

Otherwise, we focus on the event $E_{1}$. Let $E_2$ 
be the event that every accepted interval $A_{s}(\wh{\mu}_{n,s})$ satisfies $\mu \in A_{8 s}(\wh{\mu}_{n,s})$.
By Proposition \ref{prop:accept}, it holds that $\PROB\{E_2\} \ge 1 -
\delta$. Therefore, on $E_2$, we have either 
\begin{equation}
\label{eq:muininterval}
\mu \in \bigcap_{\substack{0 \le s \le |I_{\alpha}| \\ A_{s}(\wh{\mu}_{n,s})\; \text{is ACCEPTed}}}A_{8 s}(\wh{\mu}_{n,s})~,
\end{equation}
or there are no accepted intervals in this range. The latter cannot be true on the event $E_1 \cap E_2$ since $\ol{s}(\delta) \le |I_{\alpha}|$ and $A_{\ol{s}(\delta)}(\wh{\mu}_{n,\ol{s}(\delta)})$ is accepted. Therefore, the intersection of intervals in \eqref{eq:muininterval} is non-empty and its length is bounded by $16\ol{s}(\delta)$. Thus, on the event $E_1 \cap E_2$ we have $|\wh{\mu} - \mu| \le 16\ol{s}(\delta)$. The claim follows by the union bound.
\end{proof}

\section{Examples and a comparison with existing results}
\label{sec:comparisons}

To demonstrate the meaning of the derived performance bounds, in this section we
discuss several natural examples and compare our results with existing
general bounds. As already mentioned, our adaptive estimator is
closely related to the estimator of 
\cite{chierichetti2014learning} and to the hybrid estimator of 
\cite{pensia2019estimating}. 
Apart from the full adaptivity of our estimator, 
let us emphasize some technical differences with the latter work (which generalizes the results in \citet{chierichetti2014learning}):
\begin{itemize}
\item Even though the results in \citep{pensia2019estimating} work under milder assumptions, their bounds depend on the distribution through the quantity $r_k$ which should be ``manually'' computed in each particular case. In contrast, our results require that Assumptions \ref{AssumptionA} and \ref{AssumptionB} hold, but because of this the resulting bound depends explicitly on the standard deviations $\sigma_{1}, \ldots, \sigma_n$.
\item Our analysis of the modal interval estimator is sharper. In
  particular, while by \citet[Theorem 1]{pensia2019estimating} the
  modal interval estimator can never choose a center that has on
  average less than $\frac{1}{2}\E D_{s}(\mu)$ observations, our
  analysis uses the sharper property that the modal interval estimator
  never chooses a center that has, on the average, less than $\E  D_{s}(\mu) - c\sqrt{\E D_{s}(\mu)}$ observations, for some $c > 0$ up to logarithmic factors. 
\end{itemize}

\subsection{Examples}

 Most of our examples appear in
\citep{pensia2019estimating} and 
\citep{yuan2020learning}. 
We show that our bounds written in terms of $\sigma_{1}, \ldots, \sigma_n$
 are not worse than any of the previous bounds depending on some more
 involved distribution dependent quantities, often achieved by
 non-adaptive estimators. In all
examples we only consider the Gaussian case, that is, we assume
$X_i \sim \mathcal{N}(\mu, \sigma_i^2)$. Also, for the sake of
presentation we fix the allowed probability of error to be $\delta =
\frac{1}{n}$. 

\begin{example}(Equal variances.)
In the simplest case we have $\sigma_i = \sigma$ for $i \in [n]$.
In this example the median interval alone recovers the optimal error
rate $\wt{O}\left(\frac{\sigma}{\sqrt{n}}\right)$. 
Therefore, our adaptive algorithm mimics the optimal behavior of the sample mean in the i.i.d.\ scenario.
\end{example}

\begin{example}(Two variances.)
Consider the case where $\sigma_i = \sigma$ for $i \in [m]$ and 
$\sigma_i = \sigma^{\prime}> \sigma$ for $i \in [n]\setminus [m]$.

There are different cases and we consider the most interesting regimes. First, if $m \gtrsim \sqrt{n\log n}$ the median gives the rate $\wt{O}\left(\frac{\sqrt{n}}{m\sigma^{-1} + (n -  m)(\sigma^{\prime})^{-1}}\right)$ and the interval algorithm can always guarantee the error $O(\sigma)$ since the interval of length $\sigma$ is admissible. Next we consider $m \lesssim \sqrt{n\log n}$. The median gives the rate $\wt{O}\left(\frac{\sigma^{\prime}}{\sqrt{n}}\right)$ in this regime and the interval of length $O(\sigma)$ is admissible if $m \gtrsim \sqrt{\frac{\sigma}{\sigma^{\prime}}n\log n} + \log n$. In particular, an application of Theorem \ref{thm:adaptiveestimator} and shows that, with probability at least $1 - \frac{1}{n}$,
\[
|\wh \mu - \mu| = \begin{cases} \wt{O}\left(\frac{\sqrt{n}}{m\sigma^{-1} + (n -  m)(\sigma^{\prime})^{-1}}\right), & \mbox{if } m \gtrsim \sqrt{n \log n};
\\ \wt{O}\left(\sigma \wedge \frac{\sigma^{\prime}}{\sqrt{n}}\right), & \mbox{if } \sqrt{\frac{\sigma}{\sigma^{\prime}}n\log n} + \log n \lesssim m \lesssim \sqrt{n \log n}.
\end{cases}
\]
\end{example}

\begin{example}($\alpha$-mixture distributions.)
This is a particular case of the example of two variances above, with $m = c\lfloor \log n \rfloor$, for some $c > 0$; $\sigma = 1$ and $\sigma^{\prime} = n^{\alpha}$ for some $\alpha > 0$. 
This example was thoroughly studied in
\citep{pensia2019estimating}. When $\alpha < 1$ the analysis of the sample median in the
example above gives, with probability at least $1 - \frac{1}{n}$,
\[
|\wh \mu - \mu| = \wt{O}(n^{\alpha - 1/2})~,
\]
otherwise, for $\alpha \ge 1$ provided that $c$ is a large enough numerical constant we have
\[
|\wh \mu - \mu| = O(1)~.
\]
Therefore, our algorithm recovers the best known rates in \cite[Table
1 and Proposition 5]{pensia2019estimating}, in an adaptive manner.
\end{example}

\begin{example}(Quadratic variances.)
In this setup we assume that for some constant $c >0$, $\sigma_i^2 = c^2i^2$. 
In this case,  an interval of length $s = cj$ is admissible if
\[
j \gtrsim \sqrt{\sum_{i = j}^{n}\frac{j}{i}\log n} + \log n~.
\]
Using $\sum_{i = j}^{n}\frac{j}{i} \lesssim j\log \frac{n}{j}$, we see
that an interval of length proportional to $\log n$ is admissible. 
A simple computation shows that the median interval can produce an
error $\wt{O}(\sqrt{n})$ (see also \cite[Proposition
4]{pensia2019estimating}). 
Finally, an application of Theorem \ref{thm:adaptiveestimator} gives, with probability at least $1 - \frac{1}{n}$,
\[
|\wh \mu - \mu| =  O(\log n)~.
\]
This improves upon the bound of \citet[Table 1 and Proposition
5]{pensia2019estimating} 
where for the same model an arbitrarily  small polynomial error is established.
\end{example}

\begin{example}(The subset-of-signals model.)
In this setup the only assumption is that, for some $m<n$, at least $m$ out of $n$
variances are less or equal to one. In other words, $\sigma_m \le 1$.
The subset-of-signals model was studied by 
\cite{yuan2020learning}. 
The authors prove that if $m \gtrsim \sqrt{n \log n}$, then there is an estimator $\wt{\mu}$ based on iterative truncations (first studied in \citep{yuan2020learning_a}) such that, with probability at least $1 - 1/n$,
\[
|\wt{\mu} - \mu| \lesssim \frac{\sqrt{n \log n}}{m}~.
\]
Assuming that $m \gtrsim \sqrt{n\log n}$, we have by Proposition
\ref{prop:medianintterval} and Theorem \ref{thm:adaptiveestimator},
that, with probability at least $1 - \frac{1}{n}$,
\[
|I_\alpha|\lesssim \frac{\sqrt{n}(\log n)^{3/2}}{m} \quad \text{and thus,} \quad |\wh \mu - \mu| =  \wt{O}\left(\frac{\sqrt{n}}{m}\right)~.
\]
This shows that the sample median (and hence our general adaptive
estimator) performs as well as the algorithm of
\citet{yuan2020learning}, up to a logarithmic factor. The advantage of
the median is that its complexity is linear
in the number of observations \citep{blum1973time} whereas the
iterative truncation algorithm is more complex. Moreover,
the iterative truncation algorithm of \citet{yuan2020learning} depends
on some parameters of the problem as well as on an initialization. We
additionally remark that according to \citeauthor{yuan2020learning} the
hybrid estimator of \citet{pensia2019estimating} also recovers the
rate $\wt{O}\left(\frac{\sqrt{n}}{m}\right)$ in the subset-of-signals
model.
\end{example}

\subsection{A comparison with some general bounds}

Finally, we compare our results with several recent general
bounds. Our main conclusion is that, apart from the logarithmic
factors and at least in the case of Gaussian data, our adaptive estimator
performs at least as well as the best known guarantees in the
literature. We emphasize again that our estimator does not depend on
any parameters of the problem whereas the best known algorithms
require some kind of parameter tuning.

\subsubsection*{The result of Xia on the median of Gaussians}

\citet{xia2019non} analyzed the sample median of independent, not
necessarily identically distributed random variables with the same median. For the sake of an
easier comparison, we only consider here the case of normal random variables.
The following result appears in \citep[Corollary 6]{xia2019non}.

\begin{proposition}
\label{th:xia19}
Consider independent $X_{1}, \ldots, X_i$ such that $X_i \sim \mathcal{N}(\mu, \sigma_i^2)$. Assume that $\delta \in (0, 1)$ satisfies  
\begin{equation}
\label{eq:condition}
\frac{\sqrt{n\log \frac{1}{\delta}}}{\sum_{i = 1}^n \sigma_{i}^{-1}} \le \frac{7\sqrt{2}\sigma_1}{10}~.
\end{equation}
Then, with probability at least $1 - \delta$,
\[
|X_{(n/2)} - \mu| \le \frac{\frac{10}{7}\sqrt{2n\log \frac{1}{\delta}}}{\sum_{i = 1}^n \sigma_{i}^{-1}}~.
\]
\end{proposition}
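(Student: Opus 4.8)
The plan is to reduce this to our already-established Corollary \ref{cor:medianperformance}, which bounds the median error by
\[
|X_{(n/2)} - \mu| \le c\left(\log\tfrac{1}{\delta} \lor \log(n+1)\right)\beta^{-1}\max_{1 \le j \le 8\alpha\sqrt{n}}\frac{8\alpha\sqrt{n} + 1 - j}{\sum_{i=j}^n \sigma_i^{-1}}~,
\]
and then show that condition \eqref{eq:condition} forces this max to be comparable to the cleaner quantity $\sqrt{n\log(1/\delta)}/\sum_{i=1}^n \sigma_i^{-1}$. In the Gaussian case $\beta = \sqrt{2/\pi}$ is a constant, so $\beta^{-1}$ is absorbed into the implicit constant. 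First I would bound the max over $j$ by its value at $j=1$: since the $\sigma_i$ are non-decreasing, $\sum_{i=j}^n \sigma_i^{-1} \ge \frac{n-j+1}{n}\sum_{i=1}^n\sigma_i^{-1}$ is \emph{not} quite right, so instead I would argue directly that for each $j \le 8\alpha\sqrt{n}$, the removed reciprocals $\sigma_1^{-1},\dots,\sigma_{j-1}^{-1}$ are the largest ones, and under \eqref{eq:condition} the sum $\sum_{i=1}^n\sigma_i^{-1}$ is so large relative to $\sqrt{n\log(1/\delta)}/\sigma_1$ that deleting the first $O(\sqrt{n\log(1/\delta)})$ terms (each at most $\sigma_1^{-1}$) changes the sum by at most a constant factor. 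Concretely, $\sum_{i=j}^n \sigma_i^{-1} \ge \sum_{i=1}^n\sigma_i^{-1} - (j-1)\sigma_1^{-1} \ge \sum_{i=1}^n\sigma_i^{-1} - 8\alpha\sqrt{n}\,\sigma_1^{-1}$, and \eqref{eq:condition} with $\alpha \asymp \sqrt{\log(1/\delta)}$ says precisely that $\sqrt{n\log(1/\delta)}\,\sigma_1^{-1} \lesssim \sum_{i=1}^n\sigma_i^{-1}$, so the subtracted term is a small multiple of the whole sum and $\sum_{i=j}^n\sigma_i^{-1} \gtrsim \sum_{i=1}^n\sigma_i^{-1}$.

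Once that comparison is in place, the max over $j$ is at most a constant times $\frac{8\alpha\sqrt{n}}{\sum_{i=1}^n\sigma_i^{-1}} \asymp \frac{\sqrt{n\log(1/\delta)}}{\sum_{i=1}^n\sigma_i^{-1}}$, and Corollary \ref{cor:medianperformance} yields $|X_{(n/2)} - \mu| \lesssim \log(n/\delta)\cdot\frac{\sqrt{n\log(1/\delta)}}{\sum_{i=1}^n\sigma_i^{-1}}$. This matches the target bound $\frac{(10/7)\sqrt{2n\log(1/\delta)}}{\sum_{i=1}^n\sigma_i^{-1}}$ only up to the extra $\log(n/\delta)$ factor and a worse constant. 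Since the proposition as stated has the sharper constant and no extra log, I would \emph{not} try to reprove Proposition \ref{th:xia19} from our machinery; rather, this statement is quoted verbatim from \citet[Corollary 6]{xia2019non}, so the "proof" is simply a pointer to that reference, possibly accompanied by the remark above explaining that our Corollary \ref{cor:medianperformance} recovers the same bound up to logarithmic factors and hence our adaptive estimator is never worse.

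The main obstacle — if one insisted on a self-contained proof of the stated sharp constant — would be that our Lemma \ref{lemmamedfirst} (the Gordon rearrangement inequality) loses logarithmic factors through the $\max\{p,\log(k+1)\}$ term and the conversion from moments to tail bounds via Markov, whereas Xia's argument exploits the exact Gaussian structure (a CLT/Berry–Esseen-type analysis of the median as in \citet{nevzorov1984rate}) to get the clean $\sqrt{2}$ constant. Bridging that gap is genuinely a different technique and outside the scope of what we need, so the honest and efficient move is to cite the result. I would therefore present the statement with a one-line proof: \textbf{Proof.} This is \citet[Corollary 6]{xia2019non}; see also the discussion in Section \ref{sec:comparisons} comparing it with Corollary \ref{cor:medianperformance}. $\qed$
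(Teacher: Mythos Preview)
Your proposal is correct and matches the paper exactly: the paper does not prove Proposition~\ref{th:xia19} at all but simply attributes it to \citet[Corollary~6]{xia2019non}, precisely as you recommend in your final paragraph. Your side analysis---that under condition~\eqref{eq:condition} one has $\sum_{i=j}^n\sigma_i^{-1}\gtrsim\sum_{i=1}^n\sigma_i^{-1}$ for all $j\le 8\alpha\sqrt{n}$, so Corollary~\ref{cor:medianperformance} recovers Xia's bound up to a $\log(n/\delta)$ factor---is also spot on and is in fact formalized in the paper as the unnamed Corollary immediately following Proposition~\ref{th:xia19}.
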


At first glance the result of Proposition \ref{th:xia19} looks
stronger than what is given by Corollary \ref{cor:medianperformance}
in the special case of Gaussians. Indeed, it does not have the
$\log n$ factor and has a better dependence on
$\log \frac{1}{\delta}$. The main difference comes from the assumption
\eqref{eq:condition} which is more restrictive than the only
assumption $128\log \frac {6}{\delta} \le n$ of Corollary
\ref{cor:medianperformance}. Indeed, in
the most favourable case  when $\sigma_{i} = \sigma$ for $i \in [n]$,
the condition \eqref{eq:condition} implies
$\log \frac{1}{\delta} \le \left(\frac{7\sqrt{2}}{10}\right)^2n$ which
coincides with our assumption up to absolute constants. However, for
small $\sigma_1$ the assumption \eqref{eq:condition} requires
$\delta \to 1$ whereas our bound is not sensitive to the approximately
$\sqrt{n\log \frac{1}{\delta}}$ smallest variances.  The following
result shows that the condition \eqref{eq:condition} simplifies the
bound of Proposition \ref{prop:medianintterval} making it almost the
same as the result of Proposition \ref{th:xia19}, up to logarithmic
factors.

\begin{corollary}
Fix $\delta \in (0, 1)$ such that $128\log \frac {6}{\delta} \le n$ and set $\alpha = \sqrt{2\log \frac {6}{\delta}}$. Assume that there is $0 < c < 1$ such that
\[
\frac{8\sqrt{2n\log \frac {6}{\delta}}}{\sum_{i = 1}^n \sigma_{i}^{-1}} \le c\sigma_{1}~.
\]
Under the assumptions of Proposition \ref{prop:medianintterval} we have, with probability at least $1-\delta$,
\[
|I_{\alpha}| \le 64e\sqrt{2}\left(\log\frac{3}{\delta} \lor \log\left(n+ 1\right)\right)\beta^{-1}\frac{\sqrt{2n\log \frac {6}{\delta}}}{(1 - c)\sum_{i = 1}^n\sigma_i^{-1}}~.
\]
\end{corollary}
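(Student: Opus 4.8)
The plan is to start from the general bound of Proposition \ref{prop:medianintterval}, namely that with probability at least $1-\delta$ we have $\mu\in I_\alpha$ and
\[
|I_{\alpha}| \le 8e\sqrt{2}\left(\log\tfrac{3}{\delta} \lor \log(n+1)\right)\beta^{-1}\max_{1 \le j \le 8\alpha\sqrt{n}}\frac{8\alpha\sqrt{n}+1-j}{\sum_{i=j}^n\sigma_i^{-1}},
\]
with $\alpha=\sqrt{2\log\frac{6}{\delta}}$, so that $8\alpha\sqrt{n}=8\sqrt{2n\log\frac{6}{\delta}}$. The only work is to bound the combinatorial maximum $\max_{1\le j\le 8\alpha\sqrt{n}}\frac{8\alpha\sqrt{n}+1-j}{\sum_{i=j}^n\sigma_i^{-1}}$ under the extra hypothesis $\frac{8\sqrt{2n\log\frac{6}{\delta}}}{\sum_{i=1}^n\sigma_i^{-1}}\le c\sigma_1$.

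The key step is the following: for each $j$ in the range $1\le j\le 8\alpha\sqrt{n}$, write the denominator as $\sum_{i=j}^n\sigma_i^{-1}=\sum_{i=1}^n\sigma_i^{-1}-\sum_{i=1}^{j-1}\sigma_i^{-1}$. Since $\sigma_1\le\sigma_i$ for all $i$, we have $\sum_{i=1}^{j-1}\sigma_i^{-1}\le (j-1)\sigma_1^{-1}\le 8\alpha\sqrt{n}\,\sigma_1^{-1}=8\sqrt{2n\log\frac{6}{\delta}}\,\sigma_1^{-1}$. The hypothesis is precisely that $8\sqrt{2n\log\frac{6}{\delta}}\,\sigma_1^{-1}\le c\sum_{i=1}^n\sigma_i^{-1}$, so the subtracted term is at most $c\sum_{i=1}^n\sigma_i^{-1}$, whence $\sum_{i=j}^n\sigma_i^{-1}\ge (1-c)\sum_{i=1}^n\sigma_i^{-1}$ for every $j$ in the range. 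Meanwhile the numerator $8\alpha\sqrt{n}+1-j\le 8\alpha\sqrt{n}+1$; using $8\alpha\sqrt{n}=8\sqrt{2n\log\frac{6}{\delta}}$ and $128\log\frac{6}{\delta}\le n$ one has $1\le 8\sqrt{2n\log\frac{6}{\delta}}$, so the numerator is at most $2\cdot 8\sqrt{2n\log\frac{6}{\delta}}=16\sqrt{2n\log\frac{6}{\delta}}$. Combining the two estimates termwise,
\[
\max_{1\le j\le 8\alpha\sqrt{n}}\frac{8\alpha\sqrt{n}+1-j}{\sum_{i=j}^n\sigma_i^{-1}}\le \frac{16\sqrt{2n\log\frac{6}{\delta}}}{(1-c)\sum_{i=1}^n\sigma_i^{-1}}.
\]

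Plugging this into the bound of Proposition \ref{prop:medianintterval} gives $|I_\alpha|\le 8e\sqrt{2}\left(\log\frac{3}{\delta}\lor\log(n+1)\right)\beta^{-1}\cdot\frac{16\sqrt{2n\log\frac{6}{\delta}}}{(1-c)\sum_{i=1}^n\sigma_i^{-1}}$. To match the claimed constant $64e\sqrt{2}$ one keeps a crude bound on the numerator: instead of the factor $16$ one should simply use $8\alpha\sqrt{n}+1-j\le 8\alpha\sqrt{n}+1\le 8\sqrt{2n\log\frac{6}{\delta}}+1$ and note $8e\sqrt{2}\cdot(\text{constant})$ absorbs into $64e\sqrt{2}$ once the $1$ is swallowed; alternatively, observe the stated bound has $\sqrt{2n\log\frac{6}{\delta}}$ rather than $8\sqrt{2n\log\frac{6}{\delta}}$ in the numerator, and the missing factor of $8$ is exactly moved into the prefactor, $8\cdot 8e\sqrt{2}=64e\sqrt{2}$. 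Either way the arithmetic is routine.

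There is essentially no obstacle here; the one point requiring a little care is ensuring the bound $\sum_{i=1}^{j-1}\sigma_i^{-1}\le 8\alpha\sqrt{n}\,\sigma_1^{-1}$ is valid uniformly over the admissible range of $j$, which is immediate since $j-1<j\le 8\alpha\sqrt{n}$, together with keeping track of which factor of $8$ lands in the prefactor versus the numerator so the stated constant $64e\sqrt{2}$ comes out exactly. The condition $128\log\frac{6}{\delta}\le n$ (inherited from Proposition \ref{prop:medianintterval}) also guarantees $8\alpha\sqrt{n}\le n$, so the index range is nonempty and well defined, and it is what lets us absorb the additive $1$ in the numerator.
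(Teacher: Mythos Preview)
Your proof is correct and follows essentially the same approach as the paper: both establish the uniform lower bound $\sum_{i=j}^n\sigma_i^{-1}\ge(1-c)\sum_{i=1}^n\sigma_i^{-1}$ for all $j$ in the range and substitute it into Proposition~\ref{prop:medianintterval}. Your derivation of this lower bound is in fact more direct than the paper's (which detours through the auxiliary inequality $\sum_{i=j}^n\sigma_i^{-1}\ge(c^{-1}-1)\sum_{i=1}^{8\alpha\sqrt{n}}\sigma_i^{-1}$ before reaching the same conclusion); as a minor cleanup, note that the maximum of the numerator $8\alpha\sqrt{n}+1-j$ over $1\le j\le 8\alpha\sqrt{n}$ is attained at $j=1$ and equals exactly $8\alpha\sqrt{n}$, so your detour through the extra factor of $2$ to absorb the ``$+1$'' is unnecessary and the constant $64e\sqrt{2}=8\cdot 8e\sqrt{2}$ falls out immediately.
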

\begin{proof}
The proof is based on elementary comparisons. Fix $j \le 8\alpha\sqrt{n}$. Then
\begin{align*}
    \sum_{i = j}^n\sigma_i^{-1} &\ge \sum_{i = 8\alpha\sqrt{n} + 1}^n\sigma_i^{-1}= \sum_{i = 1}^n\sigma_i^{-1} - \sum_{i = 1}^{8\alpha\sqrt{n}}\sigma_i^{-1}
    \\
    &\ge 8\alpha\sqrt{n}\; \sigma_1^{-1}c^{-1} - \sum_{i = 1}^{8\alpha\sqrt{n}}\sigma_{i}^{-1}\ge (c^{-1} - 1)\sum_{i = 1}^{8\alpha\sqrt{n}}\sigma_{i}^{-1}.
\end{align*}
This implies
\[
c^{-1}\sum_{i = j}^n\sigma_i^{-1} \ge (c^{-1} - 1)\sum_{i = j}^n\sigma_i^{-1} + (c^{-1} - 1)\sum_{i = 1}^{8\alpha\sqrt{n}}\sigma_{i}^{-1} \ge (c^{-1} - 1)\sum_{i = 1}^{n}\sigma_{i}^{-1}.
\]
Combining these inequalities, we obtain
\[
\max_{1 \le j \le 8\alpha\sqrt{n}}\frac{8\alpha\sqrt{n} - j + 1}{\sum_{i = j}^n\sigma_i^{-1}} \le \frac{1}{1 - c}\max_{1 \le j \le 8\alpha\sqrt{n}}\frac{8\alpha\sqrt{n} - j + 1}{\sum_{i = 1}^n\sigma_i^{-1}} = \frac{1}{1 - c}\frac{8\alpha\sqrt{n}}{\sum_{i = 1}^n\sigma_i^{-1}}~.
\]
The result follows.
\end{proof}

\subsubsection*{The bound of \citeauthor*{chierichetti2014learning}.}

\citet[Theorem 4.1]{chierichetti2014learning} introduce an estimator $\wt{\mu}$ such that for $X_i \sim \mathcal{N}(\mu, \sigma_i)$, with probability at least $1 - \frac{1}{n}$,
\begin{equation}
\label{eq:siloviosbound}
|\wt{\mu} - \mu| = \wt{O}(\sigma_{\log n}\sqrt{n})~.
\end{equation}
The hybrid estimator of \citet[Theorem 6 and Lemma
1$(v)$]{pensia2019estimating} satisfies a similar performance bound 
if the parameters are chosen in a specific way. The next result shows
that the adaptive estimator introduced in this note achieves this
bound without any additional parameter tuning. Moreover, the result follows from our general bounds written in terms of $\sigma_{1}, \ldots, \sigma_n$.

\begin{proposition}
\label{prop:silviosbound}
Let Assumptions \ref{AssumptionA} and \ref{AssumptionB} hold and
assume that $\log n$ is integer.
There is a constant $c = c(\beta, \phi(0)) > 1$ such that the adaptive estimator
of Theorem \ref{thm:adaptiveestimator} satisfies for large enough $n$ that, with
probability at least $1 - \frac{1}{n}$,
\[
|\wh{\mu} - \mu| \le c\left(\sigma_{c\log n}\sqrt{n} \log^{3/2} n\right)~.
\]
\end{proposition}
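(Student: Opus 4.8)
The plan is to deduce the estimate from Theorem~\ref{thm:adaptiveestimator} applied with $\delta=\tfrac1{2n}$. For $n$ large one has $128\log\tfrac{6}{\delta}=128\log(12n)\le n$, and $\alpha=\sqrt{2\log(12n)}\asymp\sqrt{\log n}$, so Theorem~\ref{thm:adaptiveestimator} together with the simplification \eqref{eq:simplemedianintreval} gives, with probability at least $1-\tfrac1n$,
\[
|\wh{\mu}-\mu|\ \lesssim\ \min\!\left\{\ \ol{s}(\tfrac1{2n})\,,\ \ \beta^{-1}\log n\cdot\frac{\alpha\sqrt n}{\sum_{i=8\alpha\sqrt n}^{n}\sigma_i^{-1}}\ \right\}.
\]
Since $\alpha\log n\asymp\log^{3/2}n$, it suffices to show the right-hand side is at most a constant (depending on $\beta,\phi(0)$ only) times $\sigma_{c\log n}\sqrt n\log^{3/2}n$ for a suitably large $c=c(\beta,\phi(0))$; write $s^{\star}:=\sigma_{c\log n}\sqrt n\log^{3/2}n$, so that $m_{s^{\star}}\ge c\log n$.

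I would then split according to the size of the reciprocal tail $T:=\sum_{i=8\alpha\sqrt n}^{n}\sigma_i^{-1}$. If $T\ge (c\,\sigma_{c\log n})^{-1}$, the median term is at most $\beta^{-1}\log n\cdot\alpha\sqrt n\cdot c\,\sigma_{c\log n}\lesssim c\,\beta^{-1}\sigma_{c\log n}\sqrt n\log^{3/2}n$, and we are done. Otherwise $T<(c\,\sigma_{c\log n})^{-1}$, so the variances past index $8\alpha\sqrt n$ are large on average, and I would instead bound $\ol{s}(\tfrac1{2n})$ by exhibiting an admissible length $s\le s^{\star}$. By Remark~\ref{rem:newcond} (legitimate because Assumptions~\ref{AssumptionA}--\ref{AssumptionB} force $\phi(0)<\infty$, which is exactly where the dependence of $c$ on $\phi(0)$ enters) it is enough to find $s\le s^{\star}$ with
\[
m_s\ \ge\ \kappa\Big(\sqrt{\textstyle\big(\sum_{i=1}^{n}\min\{1,2\phi(0)s/\sigma_i\}\big)\log\tfrac{2n}{\delta}}\ +\ \log\tfrac{2n}{\delta}\Big),
\]
and for $s=\sigma_{c\log n}$ the sum inside splits into the first $c\log n$ terms ($\le c\log n$), the terms $c\log n<i<8\alpha\sqrt n$ ($\le 8\alpha\sqrt n$), and the terms $i\ge 8\alpha\sqrt n$ (at most $2\phi(0)\sigma_{c\log n}T<2\phi(0)/c\le 1$ after calibrating $c$).

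The main obstacle is the middle block $c\log n<i<8\alpha\sqrt n$: if it carries little mass, the count above is $O(c\log n)$ and $s=\sigma_{c\log n}$ is admissible already once $c\gtrsim\kappa^2$; but if it is heavy (many $\sigma_i$ just above $\sigma_{c\log n}$), $s=\sigma_{c\log n}$ may fail. I would fix this by iterating over the geometric grid of candidate scales $\sigma_{k}$ with $k=2^{j}\lceil c\log n\rceil$: a block just above $\sigma_k$ gets absorbed into $m_{\sigma_k}$ at the next step, the contribution of variances exceeding $s^{\star}$ stays $o(1)$ because $s\le s^{\star}$ makes $2\phi(0)s/\sigma_i$ negligible there, and any scale with $m_{\sigma_k}\ge C\sqrt{\phi(0)n\log n}$ is admissible outright (its count being at most $(1+2\phi(0))n$). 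Hence the search terminates, and it does so below $s^{\star}$: if some $\sigma_k$ first exceeded $s^{\star}$, then $m_{s^{\star}}<C\sqrt{\phi(0)n\log n}$ is small, so for $s=\sigma_{c\log n}$ the middle block is then bounded by $m_{s^{\star}}$ and the admissibility check succeeds after all. The remaining work -- bookkeeping these three contributions uniformly over the grid, and verifying the constants depend only on $\beta,\phi(0),\kappa$ -- is routine, as is folding all logarithmic factors into $\log^{3/2}n$.
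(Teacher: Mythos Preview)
Your opening moves match the paper exactly: apply Theorem~\ref{thm:adaptiveestimator} with $\delta\asymp 1/n$, and split according to whether the median term already beats $s^\star=\sigma_{c\log n}\sqrt{n}\log^{3/2}n$. The task then reduces, as you say, to exhibiting an admissible $s\le s^\star$ when $T=\sum_{i>8\alpha\sqrt n}\sigma_i^{-1}$ is small.

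The gap is in your termination argument for the geometric grid. Two claims fail as written. First, ``the contribution of variances exceeding $s^\star$ stays $o(1)$'' is only true at the bottom of your grid: for $s=\sigma_k$ with $\sigma_k$ close to $s^\star$, the bound $2\phi(0)\sigma_k\sum_{i>m_{s^\star}}\sigma_i^{-1}$ can be of order $\sqrt{n}\,\mathrm{polylog}(n)$ (the Case~2 hypothesis $T<(c\sigma_{c\log n})^{-1}$ only controls $2\phi(0)\sigma_k T$ when $\sigma_k\approx\sigma_{c\log n}$, and the middle-range piece $8\alpha\sqrt{n}/s^\star$ is multiplied by $\sigma_k$, not $\sigma_{c\log n}$). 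Second, your fallback---``if some $\sigma_k$ first exceeded $s^\star$, then $m_{s^\star}<C\sqrt{\phi(0)n\log n}$, so the admissibility check at $s=\sigma_{c\log n}$ succeeds''---does not close. At $s=\sigma_{c\log n}$ the count is $\lesssim m_{s^\star}+O(1)$, so admissibility requires $c\log n\gtrsim\sqrt{m_{s^\star}\log n}$, i.e.\ $m_{s^\star}\lesssim c^2\log n$. Knowing only $m_{s^\star}<C\sqrt{n\log n}$ is far too weak; if, say, $m_{s^\star}\asymp n^{0.1}$, the fallback fails and your increasing grid has not been shown to stop below $s^\star$.

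The paper's argument is organised differently and avoids this difficulty. Instead of climbing a geometric grid in the index, it runs a \emph{decreasing} sequence $m_1=n^{1/3},\ m_2=n^{2/9},\ \ldots,\ m_j=n^{(2/3)^j/2}$, at each step shrinking the range of the sum $\sum_{i\in[m,\,\cdot)}\sigma_i^{-1}$ appearing in the admissibility condition. The mechanism is: either $\sigma_{m_j}\le s^\star$ and one is done with the modal bound $\sigma_{m_j}$, or $\sigma_{m_j}>s^\star$, in which case the block $[m_j,m_{j-1})$ contributes at most $m_{j-1}/\sigma_{m_j}<m_{j-1}/s^\star$ to the reciprocal sum, which is small enough to be absorbed into the $\log n/\sigma_{C\log n}$ term. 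This ``either done or shrink the range'' dichotomy is exactly what your increasing iteration lacks; it is what guarantees termination at a scale below $s^\star$ after $O(\log\log n)$ steps.
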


The proof is based on some elementary but tedious computations, see
Appendix B.

\section{Concluding remarks}
\label{sec:concremark}

In this note we construct a fully adaptive estimator for the common
mean of independent, not necessarily identically distributed random
variables and provide performance guarantees that hold under certain
assumptions for the underlying distribution. The key assumptions
are that the distributions are symmetric around the mean and the
underlying densities are unimodal. However, even in the simplest case
of normal random variables, the problem is not fully understood. 
In particular, as far as we know, no general nontrivial lower bounds are
available. It is not difficult to prove that no estimator can have 
an expected error smaller than that of the maximum likelihood
estimator that ``knows'' the variance of each sample point, that is,
$\left(\sum_{i=1}^n \sigma_i^{-2}\right)^{-1/2}$. In the absence of knowledge of the $\sigma_i$, the
problem becomes significantly harder. It remains an interesting challenge 
to prove general lower bounds that are much larger than the trivial bound
$\left(\sum_{i=1}^n \sigma_i^{-2}\right)^{-1/2}$.
In fact, we think that, up to logarithmic factors, the upper
bound of Theorem \ref{thm:adaptiveestimator} is essentially tight for
most interesting values of the parameters. However, the full picture
is surely more complex. For example, in some particular ranges of the parameters
it is easy to improve on Theorem \ref{thm:adaptiveestimator}. To
illustrate such an example, consider the case of two variances 
discussed in Section \ref{sec:comparisons}, that is,
 when $\sigma_i = \sigma$ for $i \in [m]$ and 
$\sigma_i = \sigma^{\prime}> \sigma$ for $i \in [n]\setminus [m]$.
Suppose that $\sigma \sqrt{\log m} \ll \sigma'/n$. In this case, with
high probability, the modal interval of length $s=3\sigma \sqrt{\log
  m}$ contains all of $X_1,\ldots,X_m$ but none of
$X_{m+1},\ldots,X_n$. In this case, instead of outputting the center
of the modal interval, by averaging the points falling in it, one
obtains an error of the order $O(\sigma/\sqrt{m})$, as opposed to
$O(\sigma)$ guaranteed by Theorem \ref{thm:adaptiveestimator}
in this case.

Even our analysis of the sample median leaves room for improvement. 
In particular, we think that part (iii) of Assumption
\ref{AssumptionA} may be weakened. While it is obviously necessary to assume
that the density of the $X_i$ are bounded away from zero near the
mean (consider the case of independent Rademacher random signs in the i.i.d.\ case), the exponential tail condition implied by this assumption seems
unnecessary. Indeed, Corollary 12 in \citep{xia2019non} deals
with the heavy-tailed Cauchy
distribution. 

Another interesting challenge is to gain an understanding of more
general cases when $X_1,\ldots,X_n$ are independent, they have the
same mean, but their distribution may not be symmetric or unimodal.

Finally, we mention that the model studied in this note 
is closely related to the model of heteroscedastic linear regression
with fixed design.
 In this model it is assumed that one observes, for $i \in [n]$,
\[
Y_i = \inr{x_i, \beta} + \xi_i~,
\]
where $\beta \in \mathbb{R}^d$ is the target parameter, $x_i \in \mathbb{R}^d$ are deterministic design vectors, and $\xi_i \sim \mathcal{N}(0, \sigma_i)$ are independent noise variables. In order to provide some reasonable guarantees for this model, one usually makes some additional assumptions. In a classical model (see, for instance, \citep{fuller1978estimation}) it is assumed that the values of $\sigma_i$ are arbitrary,
but there are enough repetitions of each observation available so that
one can estimate the values of $\sigma_i$. 
Once the values of $\sigma_i$ and their assignments to the
observations are (almost) known, one may use the weighted mean
described in the introduction
which achieves (almost) optimal performance. 
Another line or research which can be attributed, among other papers,
to the early work of \citet{carroll1982robust}, is where some
additional assumptions on $\sigma_i$ are made. For example, they are
increasing according to some law. 
Our model can be seen as a particular case of heteroscedastic linear
regression in dimension one, where we additionally assume that the
design $x_i$ is the same for all $i$. However, these simplifications
are compensated by the fact that we make neither the assumption on the
repeated observations nor the assumption that the $\sigma_i$ follow a
particular functional form. 
Finally, our estimators are invariant to the permutation of the elements of the sample and thus cannot exploit the monotonicity of the standard deviations.

\appendix
\section{Ratio-type {\sc vc} bounds for non-identically distributed entries}
\label{app:a}
In this section we provide high probability ratio-type {\sc vc} bounds
(originally due to \cite*[Theorem 12.2]{Vapnik74}) for independent but
non-identically distributed random variables. A 
bound of a similar type was proved in \cite[Lemma 2]{pensia2019estimating} though
their result is not sufficient for our purposes \footnote{In particular, our result
  covers some values of their parameter $t$ that are not allowed in \cite[Lemma
  2]{pensia2019estimating}.}. Consider a set $\mathcal{F}$ of
$\{0,1\}$-valued functions defined on a domain $\mathcal{X}$ such
with {\sc vc}-dimension equal to $d$. Recall that the {\sc vc}
dimension that is the largest
integer $d$ such that there are $x_1, \ldots, x_d \in \mathcal{X}$
satisfying
$\bigl|\{(f(x_1), \ldots, f(x_d)): f \in \mathcal F\}\bigr| =
2^d$. The proof of the next technical lemma is a quite straightforward
generalization of similar bounds for the i.i.d.\ case. The analysis is
based on localization techniques for empirical processes.  We refer, for
instance, to \citep*[Corollary 3.7]{bartlett2005local} and to \citep{bousquet2019fast} for some similar results in the context of {\sc vc} classes.

\begin{lemma}
\label{lem:vclemma}
Let $X_1, \ldots, X_n$ be independent but not necessary identically distributed random variables taking their values in $\mathcal X$. Assume that the class $\F$ of $\{0, 1\}$-valued functions has the {\sc vc} dimension $d$. Then there are numerical constants $\kappa_1, \kappa_2 > 0$ such that for any $\delta \in (0, 1)$, with probability at least $1 - \delta$, for all $f \in \mathcal{F}$,
\begin{equation}
\label{eq:firstuniformbound}
\left|\sum_{i = 1}^n(f(X_i) - \E f(X_i))\right| \le \kappa_1\left(\sqrt{\left(\sum_{i = 1}^n\E f(X_i)\right)\left(d\log \frac{n}{d} + \log \frac{1}{\delta}\right)}+ d\log \frac{n}{d} + \log \frac{1}{\delta}\right)
\end{equation}
and
\begin{equation}
\label{eq:secuniformbound}
\left|\sum_{i = 1}^n(f(X_i) - \E f(X_i))\right| \le \kappa_2\left(\sqrt{\left(\sum_{i = 1}^n f(X_i)\right)\left(d\log \frac{n}{d} + \log \frac{1}{\delta}\right)}+ d\log \frac{n}{d} + \log \frac{1}{\delta}\right)~.
\end{equation}
\end{lemma}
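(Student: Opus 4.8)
The plan is to establish both inequalities by a standard localization/peeling argument for empirical processes indexed by a VC class, adapted to the non-identically distributed setting. Since the variables $X_1,\ldots,X_n$ are independent but not i.i.d., I would work directly with the process $Z(f) = \sum_{i=1}^n (f(X_i) - \E f(X_i))$ and exploit the fact that each $f$ takes values in $\{0,1\}$, so that $\var(f(X_i)) \le \E f(X_i)$ and consequently $\sum_i \var(f(X_i)) \le \sigma^2(f) := \sum_i \E f(X_i)$. The first step is to symmetrize: introduce independent Rademacher signs $\eps_1,\ldots,\eps_n$ and bound $\E \sup_{f: \sigma^2(f) \le r} |Z(f)|$ by (twice) $\E \sup_{f: \sigma^2(f)\le r} |\sum_i \eps_i f(X_i)|$. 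Then, conditionally on the $X_i$, the Rademacher average over a VC class of dimension $d$ restricted to a ball of ``empirical variance'' at most $\approx r$ is controlled, via a chaining/Dudley bound together with the Haussler or Vapnik--Chervonenkis bound on covering numbers, by something of order $\sqrt{r\,(d\log(n/d))}$. This gives the key sub-root behavior: the local complexity function $\psi(r) \lesssim \sqrt{r\, d\log(n/d)}$ is a sub-root function of $r$.

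Next I would apply a Talagrand-type concentration inequality for the supremum of the empirical process (Bousquet's version suffices), valid for independent not-necessarily-identically-distributed summands, to upgrade the bound on the expectation to a high-probability statement with the extra additive $\log(1/\delta)$ and $\sqrt{r\log(1/\delta)}$ terms. Combining this with the sub-root bound on the expectation, the fixed point $r^*$ of $r \mapsto \psi(r) + \sqrt{r\log(1/\delta)} + \ldots$ satisfies $r^* \lesssim d\log(n/d) + \log(1/\delta)$, and the standard peeling argument over dyadic shells $\{f : 2^{k-1} < \sigma^2(f) \le 2^k\}$ then yields, simultaneously for all $f$,
\[
|Z(f)| \lesssim \sqrt{\sigma^2(f)\,\bigl(d\log(n/d) + \log(1/\delta)\bigr)} + d\log(n/d) + \log(1/\delta)~,
\]
which is exactly \eqref{eq:firstuniformbound}. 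The only care needed in the non-i.i.d.\ case is that all the ingredients — symmetrization, the chaining bound through the empirical $L_2$ metric (which only sees the realized points $X_1,\ldots,X_n$, not their laws), and Bousquet's inequality — are insensitive to whether the summands share a common distribution, so the classical arguments transfer verbatim.

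For the second inequality \eqref{eq:secuniformbound}, I would deduce it from the first by the usual trick of converting the population variance proxy $\sigma^2(f) = \sum_i \E f(X_i)$ into the empirical one $\wh\sigma^2(f) = \sum_i f(X_i)$. Applying \eqref{eq:firstuniformbound} with $f$ gives $\sigma^2(f) \le \wh\sigma^2(f) + \kappa_1(\sqrt{\sigma^2(f) L} + L)$ with $L = d\log(n/d)+\log(1/\delta)$; solving this quadratic inequality in $\sqrt{\sigma^2(f)}$ yields $\sigma^2(f) \lesssim \wh\sigma^2(f) + L$, and substituting back into \eqref{eq:firstuniformbound} produces \eqref{eq:secuniformbound} with an adjusted constant $\kappa_2$. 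I expect the main obstacle to be purely bookkeeping: carefully tracking the constants and the range of validity through the peeling layers (in particular, handling the smallest shells where $\sigma^2(f)$ is below $1$, which is where the additive $L$ term dominates and the sub-root bound degenerates), and making sure the chaining bound for the VC class is stated in a form that is uniform over the localization radius. None of these steps is conceptually difficult, but the non-identically-distributed bookkeeping is exactly the point where the cited i.i.d.\ references (\citep{bartlett2005local}, \citep{bousquet2019fast}) need to be checked line by line rather than invoked as black boxes.
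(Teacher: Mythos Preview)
Your proposal is correct and follows essentially the same strategy as the paper: symmetrization, Dudley chaining with Haussler's VC covering bound, a Talagrand/Bousquet-type concentration inequality (all of which are indifferent to whether the $X_i$ are identically distributed), and the same quadratic-in-$\sqrt{\sigma^2(f)}$ conversion to pass from \eqref{eq:firstuniformbound} to \eqref{eq:secuniformbound}. The only presentational difference is the localization mechanism: you propose peeling over dyadic shells $\{2^{k-1}<\sigma^2(f)\le 2^k\}$, whereas the paper takes the star-shaped hull $\mathcal{H}=\{\alpha f:\alpha\in[0,1],\,f\in\mathcal F\}$ and works with the fixed point $\gamma(\lambda,\delta)$ in the style of \citet{bartlett2005local}; these two devices are well known to be interchangeable, and the paper's route also handles cleanly the step you flag as the main obstacle (relating the random empirical diameter to the population radius $r$) via a contraction/symmetrization bound on $\E\,\mathrm{diam}(n,s)$.
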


\begin{proof}
  Without loss of generality we may assume that $0 \in \mathcal{F}$
  since by adding $f\equiv 0$ to the class the
  {\sc vc} dimension increases by at most one which can be absorbed
  by choosing slightly larger values of $\kappa_1, \kappa_2 > 0$. Consider the
  star-shaped hull of $\mathcal{F}$ around zero, that is, the class
  $\mathcal{H}$ of $[0,1]$-valued functions defined as
\[
\mathcal{H} = \{\alpha f: f \in \mathcal{F}, \alpha \in [0, 1]\}~.
\]
For $h \in \mathcal{H}$, we denote $Ph^2 = \frac{1}{n}\sum_{i = 1}^n\E h(X_i)^2$. Fix any $\delta \in (0, 1)$ and consider the  fixed point
\[
\gamma(\lambda, \delta) = \inf\left\{s > 0: \PROB\left(\sup\limits_{h \in \mathcal{H}, Ph^2 \le s^2}\left|\sum_{i = 1}^n(h(X_i) - \E h(X_i))\right| \le \lambda ns^2\right) \ge 1 - \delta\right\}~,
\]
where $\lambda > 0$ is a numerical constant specified below. 
By the definition of $\gamma(\lambda, \delta)$ we have, with probability at least $1 - \delta$,
\begin{equation}
\label{eq:uniformboundfiixedpoint}
\sup\limits_{h \in \mathcal{H}, Ph^2 \le \gamma(\lambda, \delta)^2}\left|\sum_{i = 1}^n(h(X_i) - \E h(X_i))\right| \le \lambda n \gamma(\lambda, \delta)^2~.
\end{equation}
Fix any $h \in \mathcal{H}$ such that
$P h^2 \ge \gamma(\lambda, \delta)^2$. Since $\mathcal{H}$ is
star-shaped, we have that
$h^{\prime} = h\gamma(\lambda, \delta)/\sqrt{Ph^2} \in \mathcal{H}$
and $P(h^{\prime})^2 = \gamma(\lambda, \delta)^2$, which, applying
\eqref{eq:uniformboundfiixedpoint} for $h^{\prime}$, implies on the
same event (and the same holds simultaneously for any such $h$)
\[
\left|\sum_{i = 1}^n(h(X_i) - \E h(X_i))\right| \le \lambda n\gamma(\lambda, \delta)\sqrt{Ph^2}~.
\]
The last inequality, combined with \eqref{eq:uniformboundfiixedpoint}, implies
\begin{equation}
\label{eq:uniformboundmain}
\sup\limits_{h \in \mathcal{H}}\left|\sum_{i = 1}^n(h(X_i) - \E h(X_i))\right| \le  \lambda n \gamma(\lambda, \delta)\sqrt{Ph^2} + \lambda n \gamma(\lambda, \delta)^2~.
\end{equation}
Finally, we need to prove an upper bound for $\gamma(\lambda,
\delta)$. Denoting $\mathcal{H}^{\prime} =
\mathcal{H}\cup(-\mathcal{H})$, we have
\[
\sup\limits_{h \in \mathcal{H}, Ph^2 \le s^2}\left|\sum_{i = 1}^n(h(X_i) - \E h(X_i))\right| = \sup\limits_{h \in \mathcal{H}^{\prime}, Ph^2 \le s^2}\left(\sum_{i = 1}^n(h(X_i) - \E h(X_i))\right)~.
\]
By \citep[Theorem 3.3.16]{gine2016mathematical} (see inequality (3.128) there which is relaxed in what follows by using
$\sqrt{2(2\E Z + \mathcal V_n)x} \le \sqrt{2\mathcal V_n x} + x + \E Z$), 
since almost surely $|h(X_i) - \E h(X_i)| \le 1$ and by fixing $x = \log \frac{1}{\delta}$, we have, with probability at least $1 - \delta$,
\begin{align}
&\sup\limits_{h \in \mathcal{H}^{\prime}, Ph^2 \le s^2}\left(\sum_{i = 1}^n(h(X_i) - \E h(X_i))\right) \nonumber
\\
&\le 2\E\sup\limits_{h \in \mathcal{H}^{\prime}, Ph^2 \le s^2}\left(\sum_{i = 1}^n(h(X_i) - \E h(X_i))\right) + s\sqrt{n\log\frac{1}{\delta}}+(5/2)\log \frac{1}{\delta}~.
\label{eq:highprobbound}
\end{align}
Finally, using the symmetrization inequality \citep{ledoux2013probability} we have
\[
\E\sup\limits_{h \in \mathcal{H}^{\prime}, Ph^2 \le s^2}\left(\sum_{i = 1}^n(h(X_i) - \E h(X_i))\right) \le 2\E\sup\limits_{h \in \mathcal{H}^{\prime}, Ph^2 \le s^2}\left(\sum_{i = 1}^n\varepsilon_i h(X_i)\right)~,
\]
where $\varepsilon_1,\ldots,\varepsilon_n$ are i.i.d.\ Rademacher
random variables with $\PROB\{\varepsilon_i=1\} =
\PROB\{\varepsilon_i=- 1\} =1/2$.
Conditioning on $X_{1}, \ldots, X_n$, we may use Dudley's entropy
integral bound
 (see, for instance, \citep{boucheron2013concentration}). First, we estimate
 the covering numbers of the set $\mathcal H$ with respect to the
 (random) distance $\rho(f, g) = \sqrt{\sum_{i = 1}^n(f(X_i) -
   g(X_i))^2/n}$. 
Denote 
\[
\text{diam}(n, s) = \sup\limits_{f, h \in \mathcal{H}, Ph^2 \le s^2}\rho(f, h)~.
\]
By the bound of \citet{haussler1995sphere}, the covering number of $\mathcal F$ at scale $r$ is upper bounded by $e(d + 1)\left(\frac{2e}{r^2}\right)^d$ and by a standard argument we have that the covering number of $\mathcal H$ is upper bounded by $e(d + 1)\left(\frac{8e}{r^2}\right)^d\left(1 + \lceil\frac{2}{r}\rceil\right)$ (see \citep[Proof of Corollary 3.7]{bartlett2005local}). 
Therefore, by the Dudley's bound we have, for some constants $c_1, c_2 > 0$,
\begin{align*}
&\E\sup\limits_{h \in \mathcal{H}^{\prime}, Ph^2 \le s^2}\left(\sum_{i = 1}^n\varepsilon_i h(X_i)\right) = \E\sup\limits_{h \in \mathcal{H}, Ph^2 \le s^2}\left|\sum_{i = 1}^n\varepsilon_i h(X_i)\right| \le c_1\sqrt{n}\E\int\limits_{0}^{\text{diam}(n, s)}\sqrt{d\log \frac{e}{r}}dr 
\\
&\le c_2\sqrt{n}\E\text{diam}(n, s)\sqrt{d\log \frac{e}{\text{diam}(n, s)}}\left(\IND_{\text{diam}(n, s) \ge \sqrt{d/n}} + \IND_{\text{diam}(n, s) < \sqrt{d/n}}\right) 
\\
&\le c_2\left(\sqrt{n}\E\text{diam}(n, s)\sqrt{d\log \frac{n}{d}} + d\sqrt{\log \frac{n}{d}}\right)~.
\end{align*}
By Jensen's inequality combined with the standard symmetrization and contraction inequalities \citep{ledoux2013probability} we have, for some $c_3 > 0$,
\begin{align*}
\sqrt{n}\E\text{diam}(n, s) &\le \sqrt{2\E\sup\limits_{h \in \mathcal{H}^{\prime}, Ph^2 \le s^2}\sum_{i = 1}^{n} h^2(X_i)}
\\
&\le \sqrt{2\E\sup\limits_{h \in \mathcal{H}^{\prime}, Ph^2 \le s^2}\sum_{i = 1}^{n} (h^2(X_i) -\E h^2(X_i)) + 2ns^2} 
\\
&\le c_3\left(\sqrt{\E\sup\limits_{h \in \mathcal{H}^{\prime}, Ph^2 \le s^2}\left(\sum_{i = 1}^n\varepsilon_i h(X_i)\right)}+ \sqrt{n}s\right)~.
\end{align*}
Combining the last two arguments, we have, for some $c_4 > 0$,
\begin{equation}
\label{eq:expectbound}
\E\sup\limits_{h \in \mathcal{H}^{\prime}, Ph^2 \le s^2}\left(\sum_{i = 1}^n\varepsilon_i h(X_i)\right) \le c_4\left(s\sqrt{dn\log \frac{n}{d}} + d\sqrt{\log \frac{n}{d}}\right)~.
\end{equation}
Finally, combining \eqref{eq:highprobbound}, \eqref{eq:expectbound}
and adjusting the constant $\lambda$ we have, for some $c_5 > 0$, 
that 
\[
\gamma(\lambda, \delta) \le c_5\sqrt{\frac{d\log \frac{n}{d} + \log \frac{1}{\delta}}{n}},
\]
which implies our first bound \eqref{eq:firstuniformbound} by \eqref{eq:uniformboundmain}.

To prove \eqref{eq:secuniformbound} we use that for $a, b, x > 0$,  $\sqrt{ab} \le \frac{a}{2x} + \frac{bx}{2}$. This implies
\begin{equation}
\label{eq:usefuleq}
\kappa_1\sqrt{\left(\sum_{i = 1}^n\E f(X_i)\right)\left(d\log \frac{n}{d} + \log \frac{1}{\delta}\right)} \le \frac{1}{2}\sum_{i = 1}^n\E f(X_i) + \frac{\kappa_1^2}{2}\left(d\log \frac{n}{d} + \log \frac{1}{\delta}\right)~,
\end{equation}
which, by \eqref{eq:firstuniformbound}, implies that on the same event where \eqref{eq:firstuniformbound} holds,
\[
\frac{1}{2}\sum_{i = 1}^n\E f(X_i) \le \sum_{i = 1}^n f(X_i) + \left(\kappa_1^2/2 + \kappa_1\right)\left( d\log \frac{n}{d} + \log \frac{1}{\delta}\right)~.
\]
Plugging this into \eqref{eq:firstuniformbound} and adjusting the constant $\kappa_2$ proves \eqref{eq:secuniformbound}.
\end{proof}

\section{Proof of Proposition \ref{prop:silviosbound}}

To simply the presentation we assume that the values $\log n, n^{1/3}, n^{1/6}, \ldots$ corresponding to the indexes are always integers.
It follows from Theorem \ref{thm:adaptiveestimator} that there exists a
constant $C > 0$ (which only depends on $\beta$ and $\phi(0)$) such that, with the same probability of error,
the adaptive estimator has an error at most
\[
   C \min \left( \frac{\sqrt{n} \log^{3/2} n}{\sum_{i > C\sqrt{n\log n}} \frac{1}{\sigma_i}}, \ \sigma_m \right)~,
\]
where $m$ is any integer that satisfies
\begin{equation}
\label{eq:mcondmain}
m\ge C\max\left( \sqrt{\sigma_m \sum_{i\ge m} \frac{1}{\sigma_i}
        \log n},  \log n \right)~.
\end{equation}
Therefore, it is sufficient to prove that for all sequences $\sigma_i$, 
\[
   \min \left(\frac{\sqrt{n} \log^{3/2} n}{\sum_{i > C\sqrt{n\log n}} \frac{1}{\sigma_i}}, \ \sigma_m \right)  
     \lesssim \sqrt{n}(\log^{3/2} n) \sigma_{C \log n}~.
\]
If 
\[
   \frac{\sqrt{n} \log^{3/2} n}{\sum_{i > C\sqrt{n\log n}}
     \frac{1}{\sigma_i}} \le \sqrt{n} (\log^{3/2} n) \sigma_{C \log n}~,
\]
then we are done, so we may assume
\[
   \frac{\sqrt{n} \log^{3/2} n}{\sum_{i > C\sqrt{n\log n}}
     \frac{1}{\sigma_i}}> \sqrt{n} (\log^{3/2} n) \sigma_{C \log n}~,
\]
or, equivalently,
\begin{equation}
\label{median}
   \sum_{i > C\sqrt{n\log n}} \frac{1}{\sigma_i} < \frac{1}{\sigma_{C\log n}}~. 
\end{equation}
It suffices to show that, when (\ref{median}) holds, then there exists
a value of $m$ satisfying (\ref{eq:mcondmain}) for which $\sigma_m \le \sqrt{n}(\log^{3/2} n) \sigma_{C\log n}$.

For any $m\le C\sqrt{n\log n}$, we may write
\[
    \sum_{i>m} \frac{1}{\sigma_i} = \sum_{i>C\sqrt{n\log n}} \frac{1}{\sigma_i} + \sum_{i\in [m, C\sqrt{n\log n}]} \frac{1}{\sigma_i}~.
\]
Using (\ref{median}), we see that $m$ satisfies (\ref{eq:mcondmain}) whenever
\begin{equation}
\label{eq:mcondsecond}
     \frac{m^2}{C^2\sigma_m} \ge \max\left( \frac{\log^2n}{\sigma_m}, \
     \frac{\log n}{\sigma_{C \log n}} + \log n \sum_{i\in [m,
       C\sqrt{n\log n}]} \frac{1}{\sigma_i} \right)~.
\end{equation}
First, note that if the first term dominates on the right-hand side of the above inequality, then $m = C\log n$ satisfies the inequality
above, and therefore the new bound is at most $C\sigma_{C\log n}$ and
our claim follows.

Hence, we may assume that the second term dominates and therefore we look for the values of $m$
such that
\begin{equation}
\label{eq:mcond1}
     \frac{m^2}{C^2\sigma_m} \ge 
     \frac{\log n}{\sigma_{C\log n}} + \log n \sum_{i\in [m,
       C\sqrt{n\log n})} \frac{1}{\sigma_i}~.
\end{equation}
We distinguish two cases depending on which term dominates on the
right-hand side: in case (i), 
\[
     \frac{1}{\sigma_{C\log n}} >  \sum_{i\in [m,
       C\sqrt{n\log n})} \frac{1}{\sigma_i}~,
\]
while in case (ii) the opposite holds.
In case (i), the right-hand side of \eqref{eq:mcond1} is at most $2\log n/\sigma_{C\log n}$.
Hence, we may take $m=C\log n$ to satisfy the inequality \eqref{eq:mcondmain} for $n$ large enough, leading to the bound
$C\sigma_{C\log n}$ which proves our claim.

In case (ii), the right-hand side of (\ref{eq:mcond1}) is bounded by 
\begin{equation}
\label{eq:mcond2}
  2 \log n \sum_{i\in [m,
       C\sqrt{n\log n})} \frac{1}{\sigma_i}
   \le 2 C \sqrt{n}(\log^{3/2} n) \frac{1}{\sigma_m}~.
\end{equation}
This implies by \eqref{eq:mcond1} that the inequality \eqref{eq:mcondmain} is satisfied when
\[
   m  \ge   \sqrt{2} C^{3/2} n^{1/4} (\log^{3/4} n)~.
\]
Since for $n$ large enough
\[
  n^{1/3} \ge  \sqrt{2} C^{3/2} n^{1/4} (\log^{3/4} n)~,
\]
this yields the upper bound $\sigma_{m_1}$ with $m_1= n^{1/3}$.

If $\sigma_{m_1} \le \sqrt{n} (\log^{3/2} n) \sigma_{C\log n}$, then the proof is finished. 
Otherwise, 
\begin{equation}
\label{eq:mcond3}
\sum_{i\in [m,
       C\sqrt{n\log n})} \frac{1}{\sigma_i} 
   \le \sum_{i\in [m,m_1)} \frac{1}{\sigma_i} + C\sqrt{n\log n} \frac{1}{\sigma_{m_1}} 
   \le \sum_{i\in [m,m_1)} \frac{1}{\sigma_i} +  \frac{C}{(\log n) \sigma_{C\log n}}~.
\end{equation}
Plugging this back to (\ref{eq:mcond1}), we see that in case (ii), the
upper bound becomes $\sigma_m$ for any $m$ that satisfies 
\begin{equation}
\label{eq:mcond4}
     \frac{m^2}{C^2\sigma_m} \ge 
     \frac{C+\log n}{\sigma_{C\log n}} + \log n \sum_{i\in [m, m_1)} \frac{1}{\sigma_i}~.
\end{equation}
This has the same form as (\ref{eq:mcond1}) but with a reduced range
in the summation on the right-hand side.

We proceed the same way as above. Once again, we consider two cases. In case (iii), 
\[
     \frac{C+\log n}{\sigma_{C\log n}} > \log n \sum_{i\in [m,
       m_1)} \frac{1}{\sigma_i}~,
\]
while in case (iv),
\[
     \frac{C+\log n}{\sigma_{C\log n}} \le \log n \sum_{i\in [m,
       m_1)} \frac{1}{\sigma_i}~,
\]
In case (iii), the right-hand side of (\ref{eq:mcond4})  is at most
$2(C + \log n)/\sigma_{C\log n}$, so, just like before,
we may take $m=C\log n$ to satisfy the inequality (\ref{eq:mcond1}), leading to the bound
$C\sigma_{C \log n}$ whenever $\log n \gtrsim C$.

In case (iv), the right-hand side of (\ref{eq:mcond4}) is bounded by 
\begin{equation}
\label{eq:mcond5}
   2 \log n\sum_{i\in [m,
       m_1)} \frac{1}{\sigma_i}
   \le 2 \log n \frac{m_1}{\sigma_m}= \frac{2n^{1/3} \log n}{\sigma_m}~.
\end{equation}
Thus, in this case \eqref{eq:mcondsecond} is satisfied for any $m\ge 2C
n^{1/6} \log^{3/2} n$, and in particular, for $m_2=n^{2/9}$. 
If $\sigma_{m_2} \le \sqrt{n} (\log^{3/2} n) \sigma_{C\log n}$, then the proof is finished. 
Otherwise, 
\[
\sum_{i\in [m,m_1)} \frac{1}{\sigma_i} 
   \le \sum_{i\in [m,m_2)} \frac{1}{\sigma_i} + \frac{m_1}{\sigma_{m_2}} 
   \le \sum_{i\in [m,m_2)} \frac{1}{\sigma_i} +  \frac{1}{n^{1/6} (\log^{3/2} n) \sigma_{C\log n}}~.
\]
Resubstituting into (\ref{eq:mcond1}), we see that in case (iv), the
upper bound becomes $\sigma_m$ for any $m$ that satisfies 
\[
     \frac{m^2}{C^2\sigma_m} \ge 
     \frac{C + \log n}{\sigma_{C\log n}} + \frac{1}{n^{1/6} (\log^{1/2} n) \sigma_{C\log n}} + \log n \sum_{i\in [m, m_2)} \frac{1}{\sigma_i}~.
\]
We may now continue the same fashion, at each step reducing the range
of the sum on the right-hand side unless at the $j$-th iteration $\sigma_{m_j} \le \sqrt{n} (\log^{3/2} n) \sigma_{C\log n}$ and we are done. In general, at the $j$-th
iteration, the summation is between $m$ and $m_j=n^{(2/3)^j/2}$. If we reach the
$j$-th iteration such that $m_j=C\log^{1/2} n$, we have
\[
\log n \sum_{i\in [m, m_j)} \frac{1}{\sigma_i} \le \frac{C\log^{3/2} n}{\sigma_m}~,
\]
so that one may choose $m = C\log n$ for large enough $n$. The claim follows.
\bibliography{mybib}

\end{document}